\newcommand{\defin}[1]{{\emph{#1}}\index{#1}}
\declaretheorem[name=Lemma, numberwithin = section]{lemma}
\declaretheorem[name=Theorem,sibling = lemma]{theorem}
\declaretheorem[style=definition,name=Definition, sibling=lemma]{definition}
\declaretheorem[name=Notation, sibling=lemma]{notation}
\declaretheorem[name=Corollary, sibling=lemma]{corollary}
\declaretheorem[name=Problem]{problem}
\declaretheorem[name=Example]{example}
\declaretheorem[style=definition,name=Remark, sibling=lemma]{remark}
\crefname{theorem}{Theorem}{Theorems}
\crefname{lemma}{Lemma}{Lemmas}
\crefname{claim}{Claim}{Claims}
\crefname{subclaim}{Sub-Claim}{Sub-Claims}
\crefname{observation}{Observation}{Observations}
\crefname{remark}{Remark}{Remarks}
\crefname{corollary}{Corollary}{Corollaries}
\crefname{definition}{Definition}{Definitions}
\crefname{notation}{Notation}{Notations}
\crefname{assumption}{Assumption}{Assumptions}
\crefname{conjecture}{Conjecture}{Conjectures}
\crefname{question}{Question}{Questions}
\newenvironment{txteq}
{
	\begin{equation}
	\begin{minipage}[t]{0.85\textwidth} 
	\em                                
}
{\end{minipage}\end{equation}\ignorespacesafterend}
\newenvironment{txteq*}
{
	\begin{equation*}
	\begin{minipage}[t]{0.85\textwidth} 
	\em                                
}
\newcommand{\rededge}{\mathord{\,\color{red}\vrule width 3pt height 3pt depth -1.5pt\,\vrule width 3pt height 3pt depth -1.5pt\,\vrule width 3pt height 3pt depth -1.5pt}\,}
\newcommand{\blueedge}{\mathord{\,\color{blue}\vrule width 12pt height 3pt depth -1.5pt}\,}
\newcommand{\edge}{\mathord{\,\color{black}\vrule width 2.5pt height 3pt depth -1.5pt\,\vrule width 2.5pt height 3pt depth -1.5pt\,\vrule width 2.5pt height 3pt depth -1.5pt}\,}
\newcommand{\cay}{\mathop{\mathsf{Cay}}}
\newcommand{\Z}{\mathbb{Z}}
\newcommand{\dd}{\mathsf{deg}}
\newcommand{\sm}{\smallsetminus}
\title{\LARGE{On Prime Matrix Product Factorizations}}
\author{Saieed Akbari
\\
\vspace{-0.4cm}
Department of Mathematics,\\ Sharif University of Technology, Tehran, Iran\\
\vspace{0.5cm}
Mohamad Parsa Elahimanesh\\
Department of Mathematics,\\ Sharif University of Technology, Tehran, Iran\\

\vspace{0.5cm}
Bobby Miraftab\\
School of Computer Science, \\Carleton University, Ottawa, Ontario, Canada\\

}
\date{\today}
\begin{document}

\maketitle

\fontsize{12}{16}\selectfont

\begin{abstract}
A graph $G$ factors into graphs $H$ and $K$ via a matrix product if $A = BC$, where $A$, $B$, and $C$ are the adjacency matrices of $G$, $H$, and $K$, respectively.  The graph $G$ is prime if, in every such factorization, one of the factors is a perfect matching that is, it corresponds to a permutation matrix.
We characterize all prime graphs, then using this result we classify all factorable forests, answering a question of Akbari et al. [\emph{Linear Algebra and its Applications} (2025)]. 
We prove that every torus is factorable, and we characterize all possible factorizations of grids, addressing two questions posed by Maghsoudi et al. [\emph{Journal of Algebraic Combinatorics} (2025)].
\end{abstract}

\section{Introduction}

We say that \( G \) is factored into \( H \) and \( K \) if, for some vertex orderings, 
their adjacency matrices \( A, B, \) and \( C \) satisfy \( A = BC \).
The graph \( G \) is called \emph{factorable} if such a factorization exists.
Let $H$ and $K$ be two graphs with $V(K)=V(H)=[n]=\{1,\ldots,n\}$.
If $B$ and $C$ are the adjacency matrices of $H$ and $K$, respectively, then the \defin{matrix product} $HK$ is a weighted digraph with adjacency matrix $BC$.

\noindent 
In this paper, we investigate the factorization problem for the ``matrix product of graphs''.
Recent papers \cite{spectral,herman2025matrix,maghsoudi2023matrix,Manjunatha,inf_fac} renewed interest in matrix-product factorizations.
Specifically, Maghsoudi et al. posed the following question:

\begin{problem}{\rm \cite[Problem 1]{maghsoudi2023matrix}}
Which class of graphs is factorable?
\end{problem}

Consider $K_{2,2}$, where vertices 1 and 2 are adjacent to vertices 3 and 4. Then, one can see that
$$
\begin{bmatrix}
0 & 0 & 1 & 1 \\
0 & 0 & 1 & 1 \\
1 & 1 & 0 & 0 \\
1 & 1 & 0 & 0
\end{bmatrix}=
\begin{bmatrix}
0 & 1 & 0 & 0 \\
1 & 0 & 0 & 0 \\
0 & 0 & 0 & 1 \\
0 & 0 & 1 & 0
\end{bmatrix}
\begin{bmatrix}
0 & 0 & 1 & 1 \\
0 & 0 & 1 & 1 \\
1 & 1 & 0 & 0 \\
1 & 1 & 0 & 0
\end{bmatrix}
$$
We call such a graph \defin{prime}. More precisely, we say a graph $G$
is \emph{prime} if one of the factors in every factorization of $G$ is a perfect matching.
In this paper, we study prime graphs. 
In fact, we develop some tools and show that there is a characterization of prime graphs in terms of the existence of a matched pair.

\begin{definition}\label{matched_pair}
Let $H$ and $K$ be graphs on the same vertex set. A pair $(u,v)$ is called a \defin{matched pair} if one of the following holds:
\begin{enumerate}
    \item $\dd_H(u) = \dd_H(v) = 1$ and $v \in N_H(u)$.
    \item $\dd_K(u) = \dd_K(v) = 1$ and $v \in N_K(u)$.
\end{enumerate}

\noindent Without loss of generality, assume $\dd_H(u) = \dd_H(v) = 1$ and $v \in N_H(u)$.

\end{definition}

\begin{restatable}{theorem}{mainnn}
\label{self-match}
The graph $G$ is prime if and only if every factorization contains a matched pair endpoint in each component of $G$.
\end{restatable}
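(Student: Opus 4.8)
The plan is to isolate a single statement about one factorization and then read the theorem off it. Let us call an ordered pair $(u,v)$ \emph{$H$-matched} if $\{u,v\}$ is a $K_2$-component of $H$, and symmetrically for $K$; a matched pair in the sense of \cref{matched_pair} is exactly an $H$-matched or a $K$-matched pair, and an endpoint of a matched pair is a vertex lying in such a pair. One direction is essentially free: if $G$ is prime then in any factorization some factor is a permutation matrix, hence the adjacency matrix of a perfect matching, so every vertex — and therefore every component — is the endpoint of a matched pair. For the converse it suffices to prove the local claim: \emph{if $A=BC$ and a component $D$ of $G$ contains an endpoint of a matched pair, then $B$ or $C$ restricted to $D$ is a perfect matching of $D$; in particular, if $G$ is connected and some factorization has a matched pair, then $B$ or $C$ is a permutation matrix.}

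First I would record the elementary identities forced by $A=BC$ with $A$ an adjacency matrix: $0=\operatorname{tr}(BC)=\langle B,C\rangle$, so $H$ and $K$ are edge-disjoint; $A=A^{\mathsf T}=C^{\mathsf T}B^{\mathsf T}=CB$, so $B$ and $C$ commute; $A_{ij}=|N_H(i)\cap N_K(j)|\le 1$; and $\dd_G(i)=\sum_{k\in N_H(i)}\dd_K(k)=\sum_{k\in N_K(i)}\dd_H(k)$, whence $\delta(H),\delta(K)\ge 1$. Next, suppose $(u,v)$ is $H$-matched, so after reordering $B=\left[\begin{smallmatrix}0&1\\1&0\end{smallmatrix}\right]\oplus B'$. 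Writing $C$ in the same block form, edge-disjointness kills the $\{u,v\}$-block of $C$, and expanding $A=BC$ gives: $u,v$ are nonadjacent in $G$ and in $K$; rows $u,v$ of $A$ equal rows $v,u$ of $C$, so $N_G(u)=N_K(v)=:S$ and $N_G(v)=N_K(u)=:T$; every vertex has at most one $H$-neighbour in $S$, lying in $T$ precisely when it has one, and symmetrically for $T$; and deleting $u,v$ leaves a factorization $A[V\sm\{u,v\}]=B'\,C[V\sm\{u,v\}]$ of $G-u-v$.

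The heart of the argument is propagating matchedness through the component. From the relations above I would show that every $x\in S\cup T$ has $\dd_H(x)=1$ with its $H$-partner again in $S\cup T$: no $H$-neighbour of $x$ can lie in $Q:=V\sm(\{u,v\}\cup S\cup T)$ (such a neighbour would have an $H$-neighbour in $S$, forcing it into $T$), and the one subtle case, $x\in S\cap T$, is handled by noting that an $H$-neighbour of $x$ in $S\sm T$ would have its unique $H$-partner forced into $T\sm S$, a contradiction. Hence $H[S\cup T]$ is a perfect matching, so $S\cup T$ (like $u,v$ themselves) consists of $H$-matched vertices; equivalently, the set of $H$-matched vertices is closed under taking $G$-neighbourhoods. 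Iterating from $u$ inside its component $D$ of $G$ shows $D$ is entirely $H$-matched, i.e. $B[D]$ is a perfect matching of $D$; the same works with $K$. When $G$ is connected this says $B$ or $C$ is a permutation matrix, which together with the free direction proves the theorem; in general one argues component by component, using that $B[D]$ being a perfect matching of $D$ forces $C$ to respect $D$ as a union of components (and symmetrically).

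The step I expect to be the main obstacle is the degree bookkeeping of the third paragraph — turning the purely local identities of the second paragraph into the global conclusion that a single matched pair already pins down a perfect-matching factor — and in particular handling vertices of $S\cap T$ cleanly; the levers I would lean on are the commutation identity $A=BC=CB$ and the deleted factorization $A[V\sm\{u,v\}]=B'C[V\sm\{u,v\}]$. A secondary point needing care is keeping straight the distinction between components of $G$ and components of $H$ and $K$ when $G$ is disconnected.
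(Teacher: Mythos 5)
Your argument is correct in substance but takes a genuinely different route from the paper's. The paper propagates matchedness through components of the \emph{other} factor: it invokes \cref{deg} to get $\dd_H\equiv 1$ on $C_K(u)\cup C_K(v)$, builds the neighbour map $\varphi(x)=N_H(x)$ to show $N_H(C_K(u))=C_K(v)$ (\cref{aut}), and then proves that $C_K(u)\cup C_K(v)$ is \emph{alone} (\cref{alone}), so this set swallows the entire $G$-component of $u$. You instead propagate through $G$-neighbourhoods using only matrix identities: row $u$ of $A$ equals row $v$ of $C$, giving $N_G(u)=N_K(v)=:S$ and $N_G(v)=N_K(u)=:T$, and the count $|N_H(i)\cap T|=A_{iu}$ (which is $1$ exactly when $i\in S$) forces every $x\in S\cup T$ to have a unique $H$-neighbour lying back in $S\cup T$; iterating covers the component. (The $S\cap T$ case you flag as subtle dissolves: for $x\in S$, any $H$-neighbour $y$ of $x$ satisfies $|N_H(y)\cap S|\ge 1$ and hence $y\in T$, so $N_H(x)\subseteq T$ outright.) Your version is more self-contained, rederiving edge-disjointness and the degree facts from $\operatorname{tr}(BC)=0$ and $A=CB$ rather than citing \cref{dia_con} and \cref{deg}, whereas the paper's version reuses \cref{aut} and \cref{alone}, which it needs elsewhere. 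Two cautions. First, ``$B[D]$ is a perfect matching of $D$'' overstates what you prove: the $H$-partner of a vertex of $D$ may lie in a different $G$-component (see the paper's $P_8\cup P_8$ example, where the matching crosses between the two copies); what your propagation actually yields, and what the theorem needs, is $\dd_H(x)=1$ for every $x\in D$. Second, the final assembly --- ruling out that some components end up $H$-matched and others $K$-matched, so that neither single factor is globally a perfect matching --- is exactly the step you defer to ``arguing component by component''; the paper's proof buries the same step inside its per-component ``without loss of generality'', so you are no worse off, but neither account closes it explicitly.
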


\noindent As a consequence of studying prime graphs, we introduce new classes of factorable graphs.
For instance, we characterize all factorable forests which answers a question in \cite{spectral}.
Let $G$ be a forest and $\mathcal I$ be the set of all isolated vertices of $G$.
$G$ is factorable if and only if $G\sm \mathcal I$ is factorable.
Our first application is the following:
\begin{restatable}{theorem}{mainn}
\label{mainn2}
If a forest $G$, with no isolated vertices, is factorable, then the following holds:

\begin{enumerate}[label=\rm{(\roman*)}]
    \item For every component $C$ of $G$, there is another component of $G$, say $C'$, such that $C \cong C'$.
    \item The union  $C \cup C'$ can be factored into a perfect matching and itself.

\end{enumerate}
\end{restatable}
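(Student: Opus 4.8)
The plan is to exploit \Cref{self-match}: since a forest $G$ with no isolated vertices is factorable, there exist graphs $H,K$ on $V(G)$ with $A(G)=A(H)A(K)$, and every component of $G$ must contain a matched-pair endpoint. First I would unwind what $A=BC$ forces on a forest. A forest is triangle-free and bipartite with every closed walk of even length; the $(i,i)$ entry of $BC$ counts edges of $H$ at $i$ that are also edges of $K$ at $i$ — wait, more precisely $(BC)_{ij}=\sum_k B_{ik}C_{kj}$, so $(BC)_{ii}=|N_H(i)\cap N_K(i)|=0$, meaning $H$ and $K$ share no edges; and $(BC)_{ij}\in\{0,1\}$ forces, for every edge $ij$ of $G$, a \emph{unique} common neighbour, i.e. a unique $k$ with $i\sim_H k\sim_K j$, while for non-edges $ij$ of $G$ there is \emph{no} such $k$. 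I would record these as the basic constraints and also note $A^2 = (BC)(BC)$ need not simplify, but $A=BC$ together with $G$ being a forest is very rigid.

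\medskip

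Next, using the matched-pair structure, in each component $C$ of $G$ pick a matched pair $(u,v)$; WLOG $\dd_H(u)=\dd_H(v)=1$ and $uv\in E(H)$. Since $u$ has $H$-degree $1$, row $u$ of $B$ is $e_v^\top$, so row $u$ of $A=BC$ equals row $v$ of $C$; in particular $N_G(u)=N_K(v)$ and $\dd_G(u)=\dd_K(v)$. Symmetrically (using column $v$ of $C$ versus the fact that $v$ has $H$-degree $1$ with neighbour $u$) column $v$ of $A$ equals column $u$ of $C$, so $N_G(v)=N_K(u)$ and $\dd_G(v)=\dd_K(u)$. Now I would iterate: the key is that $A_{vv}=0$ gives $v\notin N_K(u)$ consistent with the above, and tracking neighbours of neighbours, the equations $N_G(u)=N_K(v)$, $N_G(v)=N_K(u)$ should propagate along the tree. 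The crucial step is to show the map on $V(C)$ that this sets up — roughly, "follow the $H$-structure" — produces an isomorphic partner component $C'$, and that on $C\cup C'$ the whole factorization restricts to a factorization of the form (permutation matrix)$\cdot$(copy of $C\cup C'$). For a single edge $C=K_2$ this is exactly the displayed $2\times 2$ identity $\left[\begin{smallmatrix}0&1\\1&0\end{smallmatrix}\right]=\left[\begin{smallmatrix}0&1\\1&0\end{smallmatrix}\right]\left[\begin{smallmatrix}0&1\\1&0\end{smallmatrix}\right]$, and the general case should follow by an induction that peels off a leaf.

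\medskip

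Concretely, for (i) I would argue by induction on $|V(C)|$, the leaf $u$ being the base of the peel: removing $u$ (and its image under the partner map) from $G$ yields a smaller factorable forest — one must check that deleting the appropriate rows/columns of $B$ and $C$ again gives adjacency matrices of graphs and that their product is $A$ restricted to the surviving vertices; this is where the "no common edge" and "unique common neighbour" constraints do the work, ensuring no stray entries appear. The induction hypothesis then pairs up the smaller components, and reattaching $u$ and its partner leaf extends the isomorphism; the parity/counting obstruction that could a priori leave one component unpaired is exactly ruled out by the fact that \emph{every} component needs its own matched-pair endpoint, so components come in mutually-matched twins. For (ii), once $C\cong C'$ is known, I would exhibit the factorization explicitly: order $V(C\cup C')$ so that $C'$ is the isomorphic copy of $C$, let $P$ be the permutation matrix of the isomorphism $C\to C'$ extended by its inverse on $C'\to C$ (an involution, hence a perfect matching), and verify $A(C\cup C') = P\cdot A(C\cup C')$ blockwise — this reduces to $A(C)=0\cdot? \ldots$; rather, with the block form $A(C\cup C')=\left[\begin{smallmatrix}0 & M\\ M^\top & 0\end{smallmatrix}\right]$ is not forced, so instead the right model is $A(C\cup C')=\left[\begin{smallmatrix}A(C)&0\\0&A(C)\end{smallmatrix}\right]$ and $P=\left[\begin{smallmatrix}0&I\\I&0\end{smallmatrix}\right]$, giving $P\cdot A(C\cup C') = \left[\begin{smallmatrix}0&A(C)\\A(C)&0\end{smallmatrix}\right]$ — which is \emph{not} $A(C\cup C')$, so the matching must instead be chosen inside the structure coming from $H$; I expect the honest statement is that $C\cup C'$ factors as $Q\,A(C\cup C')$ where $Q$ is the perfect matching $B$ restricted to these $2|V(C)|$ vertices, and the content of (ii) is precisely that this restriction \emph{is} a permutation matrix and that the corresponding restriction of $C$ is isomorphic to $C\cup C'$.

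\medskip

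\textbf{Main obstacle.} The hard part will be the propagation/isomorphism argument in the middle paragraph: turning the local identities $N_G(u)=N_K(v)$, $N_G(v)=N_K(u)$ at a matched pair into a global isomorphism between two whole components, and simultaneously controlling where the \emph{other} endpoints of $H$- and $K$-edges go so that the leaf-deletion step stays within the class of graph factorizations (no negative or $\ge 2$ entries creeping in). Handling components that are themselves symmetric — where "$C'$" might try to coincide with "$C$" — and making the counting that forces genuine pairing airtight is the subtle point; I expect \Cref{self-match} applied component-by-component, rather than globally, is exactly what makes it go through.
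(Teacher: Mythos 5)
There is a genuine gap, and it sits exactly where you flag your ``main obstacle'': you never rule out that the partner component $C'$ coincides with $C$ itself, and you never actually produce the global isomorphism --- both are deferred to a paragraph of hopes. The paper's proof does not propagate local neighbourhood identities at all. It first notes that a forest has no $C_4$, so by \Cref{no-c4} $G$ is prime and hence one factor, say $H$ with matrix $B$, is a perfect matching; by \Cref{B_aut_G}, $B$ is then an involutive automorphism of $G$ with no fixed edge (and, being a perfect matching, no fixed vertex). The decisive ingredient you are missing is \Cref{fix_uni_}/\Cref{autT}: in a tree there is a \emph{unique} path between $x$ and $Bx$, which $B$ must reverse, so $B$ would fix either the central vertex or the central edge of that path --- impossible. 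Hence $B$ cannot stabilize any component, so it pairs each component $C$ with a genuinely different isomorphic component $C'=B(C)$, which is all of (i). Without an argument of this kind your induction cannot get started, because a single symmetric tree component (e.g.\ $P_2$ alone, or $P_4$ alone) admits the local matched-pair data you describe, and only the unique-path obstruction shows it cannot be self-paired.

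Your leaf-peeling step has a second unaddressed problem: restricting $B$ and $C$ to the surviving vertices does not automatically give $(BC)\vert_S = B\vert_S\, C\vert_S$, since $(H\text{-}K)$-paths between surviving vertices could pass through deleted vertices; you acknowledge this but offer no control. The paper avoids induction entirely: for (ii) it writes the two isomorphic bipartite components as $\mathrm{diag}(X,X)$ with $X=\left(\begin{smallmatrix}0&Y\\ Y^{T}&0\end{smallmatrix}\right)$ and multiplies by the explicit permutation matrix swapping corresponding sides of the bipartition, checking directly that the product is again a symmetric $\{0,1\}$-matrix with zero diagonal isomorphic to $A(C\cup C')$. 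Your closing guess for (ii) (that the matching is $B$ restricted to $C\cup C'$ and the other factor is isomorphic to $C\cup C'$) points in the right direction, but as written your middle attempt with $P=\left(\begin{smallmatrix}0&I\\ I&0\end{smallmatrix}\right)$ fails precisely because the swap must respect the bipartition of each tree, which is the content of the paper's block computation.
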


\begin{example}
In the following example, we see a factorization of a disjoint union of two copies of path $P_8$.
\end{example}
\begin{figure}[H]
    \centering
    \includegraphics[scale=0.7]{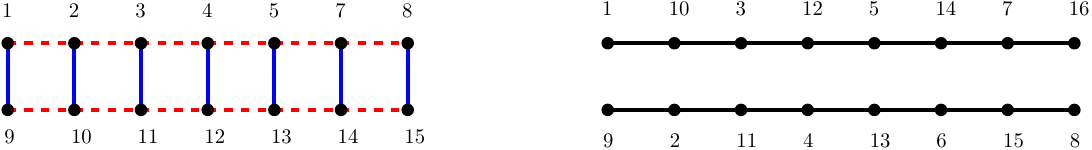}
    \caption{$P_8 \cup P_8$ is factored into $P_8 \cup P_8$ and a perfect matching.}
\end{figure}

\noindent In addition to our first application, we characterize all factorable grid graphs (see \cref{grid}) and torus graphs (see \cref{torus}), thereby resolving two questions posed by Maghsoudi \emph{et al.}~\cite{maghsoudi2023matrix}.

\begin{restatable}{theorem}{gridsss}
\label{gridss}
Let \( m, n \in \mathbb{N} \).
Then the Cartesian product \( P_n \square P_m \) is factorable if and only if both \( m \) and \( n \) are even.
\end{restatable}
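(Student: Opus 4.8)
The plan is to handle the two directions separately: the ``if'' direction by an explicit construction, and the ``only if'' direction by combining the prime-graph characterization \cref{self-match} with a description of $\mathrm{Aut}(P_n\square P_m)$.

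\emph{The ``if'' direction.} Suppose $m$ and $n$ are both even, and let $\sigma$ be the $180^\circ$ rotation $\sigma(i,j)=(n+1-i,\,m+1-j)$ of the grid, with permutation matrix $P$. Since $n$ is even, $\sigma$ has no fixed point, and since $m$ and $n$ are even the coordinate differences $n+1-2i$ and $m+1-2j$ are both odd, so no vertex is adjacent to its image; hence $P$ is symmetric with zero diagonal, i.e.\ $P$ is the adjacency matrix of a perfect matching $M$. As $\sigma$ is an automorphism of $G=P_n\square P_m$, we have $AP=PA$, so $B:=AP$ is symmetric, is a $0$--$1$ matrix (a column permutation of $A$), and has zero diagonal since $(AP)_{ii}=A_{i,\sigma(i)}=0$. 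Thus $B$ is the adjacency matrix of a simple graph $H$, and since $\sigma$ is an involution, $BP=AP^2=A$. Therefore $A=BC$ with $C$ the adjacency matrix of $M$, so $G$ is factorable.

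\emph{The ``only if'' direction.} We may assume $m,n\ge2$, as otherwise $P_n\square P_m$ is a single vertex or a path, which is not factorable by \cref{mainn2}, and the claimed equivalence holds since $1$ is odd. Let $G=P_n\square P_m$ with $m,n\ge2$ be factored as $A=BC$. The first --- and hardest --- step is to show that one of $B,C$ is the adjacency matrix of a perfect matching of $G$; equivalently, by \cref{self-match} and the connectedness of $G$, that every factorization of $G$ contains a matched pair. Here I would use the walk interpretation of $A=BC=CB$: for every vertex $x$ one has $N_H(x)\cap N_K(x)=\varnothing$, and adjacent vertices $x,y$ admit a unique $z$ with $x\sim_H z\sim_K y$ and a unique $z'$ with $x\sim_K z'\sim_H y$. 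Applying these local identities at a corner $c$ of the grid (where $\dd_G(c)=2$) yields $\sum_{s\in N_H(c)}\dd_K(s)=\sum_{t\in N_K(c)}\dd_H(t)=2$ and forces every $H$-neighbour of $c$ to send all of its $K$-edges into $N_G(c)$, and symmetrically for $K$; propagating this rigidity along the border of the grid should isolate an edge of $H$ or of $K$, i.e.\ produce a matched pair. I expect this corner-and-border analysis to be the main obstacle and the most technical part of the argument.

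Granting the first step, write $C=P$ for a perfect-matching factor, with underlying (fixed-point-free, involutive) permutation $\sigma$. Since $P=P^{-1}$ we get $B=AP$, symmetry of $B$ forces $AP=PA$, i.e.\ $\sigma\in\mathrm{Aut}(G)$, and the zero diagonal of $B$ forces $i\not\sim_G\sigma(i)$ for all $i$. It remains to show that $G=P_n\square P_m$ with $m,n\ge2$ admits a fixed-point-free involutive automorphism moving no vertex to a neighbour only when $m$ and $n$ are both even. Since each $P_k$ ($k\ge2$) is prime with respect to the Cartesian product and $\mathrm{Aut}(P_k)=\mathbb Z_2$, we have $\mathrm{Aut}(G)\cong\mathbb Z_2\times\mathbb Z_2$ (the two axis reflections $(i,j)\mapsto(n+1-i,j)$ and $(i,j)\mapsto(i,m+1-j)$) when $m\ne n$, and the dihedral group of order $8$ (adding the two diagonal reflections and the $90^\circ$ rotations) when $m=n$. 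Going through the non-identity elements: the $90^\circ$ rotations are not involutions; each diagonal reflection fixes a diagonal; an axis reflection along an even side is fixed-point-free but moves a mid-row (or mid-column) vertex to a neighbour; and the $180^\circ$ rotation is fixed-point-free iff $m$ or $n$ is even and moves some vertex to a neighbour unless $m$ and $n$ are both even. Hence a suitable $\sigma$ exists iff $m$ and $n$ are both even, completing the proof. (When $m$ and $n$ are both odd, $|V(G)|=mn$ is odd and $G$ has no perfect matching at all, so in that case the first step already yields non-factorability.)
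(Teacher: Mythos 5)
Your ``if'' direction is correct and is essentially the paper's argument made explicit: you exhibit the $180^\circ$ rotation as a fixed-point-free involutive automorphism with no vertex mapped to a neighbour and read off the factorization $A=(AP)P$, which is exactly what \cref{B_aut_G} packages (the paper additionally shows this rotation is the \emph{only} candidate, using unique shortest paths between corners, but that is not needed for existence).

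The ``only if'' direction, however, has a genuine gap at precisely the point you flag as ``the main obstacle'': you never establish that every factorization of the grid has a perfect-matching factor. Your proposed route --- local counting at a degree-$2$ corner $c$ and propagation along the border --- does not obviously close: from $\dd_G(c)=2=\dd_H(c)\dd_K(c)$ you get (say) $\dd_H(c)=1$ and $\dd_K(c)=2$, and by \cref{deg} the unique $H$-neighbour $c'$ of $c$ satisfies $\dd_K(c')=2$; but $c'$ may be an interior vertex of degree $4$, in which case $\dd_H(c')=2$ and $\{c,c'\}$ is \emph{not} a matched pair, so the corner alone yields nothing and the ``propagation'' would have to be carried out in earnest. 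The paper sidesteps this entirely with a degree-sequence lemma (\cref{pk}): a connected graph with a vertex of prime degree $p$ and no vertex of degree $kp$ for $k>1$ is prime. For $n\ge 2$, $m\ge 3$ the grid has border vertices of degree $3$ and maximum degree $4<6$, so it is prime with $p=3$ (and $C_4$ is handled with $p=2$); this is \cref{grid_prime}. Once primeness is granted, your automorphism analysis is correct but leans on the classification of $\mathrm{Aut}(P_n\square P_m)$ via prime factorization with respect to the Cartesian product (Sabidussi's theorem), which the paper avoids: for the case where exactly one of $m,n$ is even it simply observes that the center of the grid is a single edge, fixed setwise by every automorphism (\cref{fix_center}, \cref{center_grid}), contradicting the no-fixed-edge requirement of \cref{B_aut_G}. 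So the skeleton of your argument is sound, but as written the hardest step is a declared intention rather than a proof, and the elementary degree-$3$ argument is the missing ingredient.
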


\begin{example}
In the following example, we factor the grid $P_2 \square P_4$ into a perfect matching and some graph.
\end{example}
\begin{figure}[H]
    \centering
    \subfloat[\centering ]{{\includegraphics[scale=0.7]{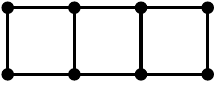} }}%
    \qquad
    \subfloat[\centering]{{\includegraphics[scale=0.7]{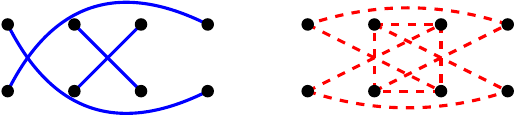} }}%
    \caption{The left graph depicts $P_2 \square P_4$, which is factored into the two graphs on the right.}%
\end{figure}

\begin{restatable}{theorem}{torusss}
\label{toruss}
Let \( m, n \in \mathbb{N} \).
Then $C_m \square C_n$ is factorable.
\end{restatable}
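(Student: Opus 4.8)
The plan is to realize $C_m\square C_n$ as a Cayley graph of the abelian group $\Gamma=\Z_m\times\Z_n$, with connection set $S=\{(1,0),(-1,0),(0,1),(0,-1)\}$, and then to reduce the factorization problem to a statement inside the group ring $\Z[\Gamma]$. For $T\subseteq\Gamma$ write $\sigma_T=\sum_{t\in T}[t]\in\Z[\Gamma]$. The first step is to record the routine computation that, for symmetric sets $S_1,S_2\subseteq\Gamma$ with $0\notin S_1\cup S_2$, each $\cay(\Gamma,S_i)$ is a genuine simple graph and the $(g,h)$ entry of $A(\cay(\Gamma,S_1))\,A(\cay(\Gamma,S_2))$ equals the coefficient of $[h-g]$ in $\sigma_{S_1}\sigma_{S_2}$. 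Thus it suffices to exhibit symmetric $S_1,S_2\subseteq\Gamma\sm\{0\}$ with $\sigma_{S_1}\sigma_{S_2}=\sigma_S$; I would then split into cases according to the parities of $m,n$ (assuming, as is standard, $m,n\ge 3$, and disposing of smaller values by hand).

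\emph{Even case.} If one of $m,n$ — say $m$ — is even, I would take $S_1=\{(m/2,0)\}$, a single involution, so that $\cay(\Gamma,S_1)$ is a perfect matching, together with $S_2=(m/2,0)+S$. Since $m\ge 4$, the four translates $(m/2\pm1,0),(m/2,\pm1)$ are distinct and nonzero, $S_2$ is symmetric because $-m/2\equiv m/2\pmod m$, and $\sigma_{S_1}\sigma_{S_2}=\sigma_{(m/2,0)+S}=\sigma_S$, finishing this case.

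\emph{Odd case.} If $m$ and $n$ are both odd, then doubling $x\mapsto 2x$ is an automorphism of $\Gamma$, so I can choose the unique $a,b\in\Gamma$ with $2a=(1,1)$ and $2b=(1,-1)$ (explicitly $a=(\tfrac{m+1}{2},\tfrac{n+1}{2})$, $b=(\tfrac{m+1}{2},\tfrac{n-1}{2})$) and set $S_1=\{a,-a\}$, $S_2=\{b,-b\}$. These are symmetric and avoid $0$ because $2a,2b\neq 0$. From $2(a+b)=(2,0)$, $2(a-b)=(0,2)$ and injectivity of doubling I get $a+b=(1,0)$ and $a-b=(0,1)$, so the four elements $\pm a\pm b$ are precisely those of $S$, each with multiplicity one; hence $\sigma_{S_1}\sigma_{S_2}=\sigma_S$.

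In both cases $H=\cay(\Gamma,S_1)$ and $K=\cay(\Gamma,S_2)$ satisfy $A(H)A(K)=A(C_m\square C_n)$, so the torus is factorable. The only place needing a genuine idea is the odd–odd case: there the torus has no fixed-point-free involutive automorphism missing all of its edges, so no perfect-matching factor can be peeled off as in the even case, and one instead has to ``extract square roots'' in $\Gamma$ and write $S$ itself as the convolution $\{\pm a\}*\{\pm b\}$. Everything else is bookkeeping — checking symmetry and loop-freeness of the $S_i$, distinctness of the translates in the even case, and that $\sigma_{S_1}\sigma_{S_2}$ picks up no extra multiplicities. I would also note, since the construction yields it for free, that in the odd–odd case neither factor is a perfect matching, so such tori are not prime.
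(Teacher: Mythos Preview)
Your proof is correct, and the factorizations you produce are exactly the ones the paper uses: in the even case the paper also takes $H=\cay(\Gamma,\{(m/2,0)\})$ (phrased as a perfect matching and handled via the automorphism criterion of \Cref{B_aut_G}), and in the odd--odd case the paper's $H$ and $K$ are precisely $\cay(\Gamma,\{\pm b\})$ and $\cay(\Gamma,\{\pm a\})$ with your $a,b$. The difference is only in the verification: the paper checks the diamond condition by running through the four cases $u+s_1+s_2$ with $s_1\in\{\pm b\}$, $s_2\in\{\pm a\}$, whereas your group-ring identity $\sigma_{S_1}\sigma_{S_2}=\sigma_S$ does the same bookkeeping in one line and makes the absence of multiplicities transparent. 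Your closing remark that odd--odd tori are not prime is a nice bonus the paper does not state.
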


\begin{example}
In the following example, we factor $C_3 \square C_3$ into two 2-regular graphs.
\end{example}
\begin{figure}[H]
    \centering
    \subfloat[\centering ]{{\includegraphics[scale=0.7]{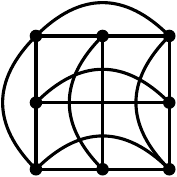} }}%
    \qquad
    \subfloat[\centering]{{\includegraphics[scale=0.7]{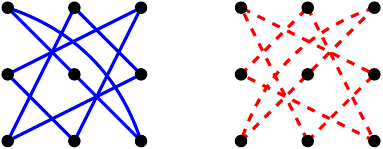} }}%
    \caption{Factorization of $C_3\square C_3$ into two 2-regular graphs.}%
\end{figure}

\section{Preliminaries}
Throughout, $G$ is a \emph{simple} graph (no loops or parallel edges) without isolated vertices, with vertex set $V(G)$ and edge set $E(G)$.
Let $u$ and $v$ be two vertices in $G$. If $u$ and $v$ are adjacent, then we denote the edge between them as $u\edge v$.

\noindent In \cite{maghsoudi2023matrix}, Maghsoudi et al. presented an equivalent definition of the matrix product in a purely combinatorial manner. First, we need to establish a few notations and definitions.

\begin{notation}{\rm\cite{Rosen}}
  Let $H$ and $K$ be graphs with the same vertex set $V(H) = V(K) = [n]$.  
  The disjoint union of $H$ and $K$, denoted by $H \oplus K$, is the graph with vertex set $[n]$ and edge multiset $E(H) \sqcup E(K)$, where edges occurring in both $H$ and $K$ appear with multiplicity.
\end{notation}

\noindent Let $H$ and $K$ be two graphs with $V(H)=V(K)=[n]$.
A path $u\edge w\edge v$ is a $(H-K)$-path of length $2$ if $u\edge w\in E(H)$ and $w\edge v\in E(K)$.
(The endpoints $u$ and $v$ may coincide.)

\begin{definition}
Let $H$ and $K$ be two graphs with $V(H)=V(K)=[n]$.
Then $HK$ is a digraph(multi-arcs and loops are allowed) on $[n]$ such that
\begin{txteq}
    $i\xrightarrow{k}j$ if and only if  $\exists $ $k$ $(H-K)$-paths of length $2$ from $i$ to $j$ in $H\oplus K$
\end{txteq}
The notation $i\xrightarrow{k}j$ means there are $k$ arcs from $i$ to $j$. 
\end{definition}

In this paper, we seek graphs $H$ and $K$ with the adjacency matrices $B$ and $C$ such that $BC$ is a $(0,1)$-symmetric matrix with zero diagonal.

We next describe a combinatorial characterization of factorable graphs.
Recall that a $k$-edge coloring of a graph assigns one of $k$ colors to each edge. 
This coloring need not be a proper edge coloring.

\begin{definition}{\rm(cf. \cite[Definition 2.6]{godsil2001algebraic})}
We say that $H \oplus K$ satisfies the \defin{diamond condition} if, for every pair of vertices $u,v$, whenever there exists an $(H-K)$-path of length $2$ from $u$ to $v$, there exists a unique $(K-H)$-path of length $2$ from $u$ to $v$.
\end{definition}

\begin{figure}[H]
    \centering
    \subfloat[$a \rededge b \blueedge d$ is a $(K-H)$-path and $a \blueedge c \rededge d$ is an $(H-K)$-path.]{{\includegraphics[scale=0.65]{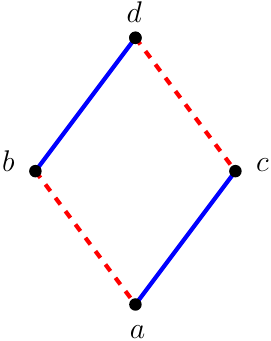} }}%
    \qquad
    \subfloat[$K_5$ satisfies the diamond condition: whenever there is an $(H-K)$-path $u \blueedge w \rededge v$, there is a unique $(K-H)$-path $u \rededge x \blueedge v$ too.]{{\includegraphics[scale=1.1]{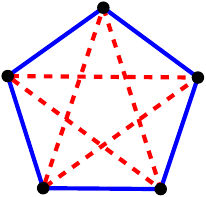} }}%
    \caption{}
    \label{dimond}
\end{figure}

\noindent Color the edges of $H$ blue and those of $K$ red; their union $H\oplus K$ is thus a 2-edge-colored graph.  Understanding this coloring is key to characterizing when $G=HK$ exists.

\begin{lemma}{\rm\cite[Theorem 1]{Manjunatha}}\label{dia_con}
Let a graph $G$ be factored into graphs $H$ and $K$.
Then the following holds:
\begin{enumerate}[label=\rm{(\roman*)}]
    \item  $H\oplus K$ is simple i.e. $H$ and $K$ are edge disjoint.
    \item There is a $2$-edge coloring of $H\oplus K$ in which $E(H)$ is blue and $E(K)$ is red, satisfying the diamond condition.
\end{enumerate}  
\end{lemma}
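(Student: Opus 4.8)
The plan is to translate both claims into statements about the entries of $A=BC$, exploiting that $B$ and $C$, being adjacency matrices of simple graphs, are symmetric $(0,1)$-matrices with zero diagonal, and that by hypothesis $A=BC$ is itself a symmetric $(0,1)$-matrix with zero diagonal. The only facts I need are the entrywise formula $(BC)_{ij}=\sum_w B_{iw}C_{wj}$ together with $B^{\mathsf T}=B$ and $C^{\mathsf T}=C$. Note that $B_{iw}C_{wj}=1$ precisely when $i\edge w\in E(H)$ and $w\edge j\in E(K)$, so $(BC)_{ij}$ counts exactly the $(H-K)$-paths of length $2$ from $i$ to $j$; symmetrically, $(CB)_{ij}$ counts the $(K-H)$-paths.

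For part (i) I would argue by contradiction. Suppose $H$ and $K$ share an edge $u\edge v$, so that $B_{uv}=C_{vu}=1$. Then the diagonal entry $A_{uu}=\sum_w B_{uw}C_{wu}$ contains the nonnegative summand $B_{uv}C_{vu}=1$, forcing $A_{uu}\ge 1$. This contradicts the zero-diagonal hypothesis on $A$. Hence $E(H)\cap E(K)=\varnothing$, so $H\oplus K$ is simple, and we may unambiguously color $E(H)$ blue and $E(K)$ red.

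For part (ii) the crucial observation is that the two colors of paths are counted by $BC$ and $CB$ separately: the number of $(H-K)$-paths of length $2$ from $u$ to $v$ is $(BC)_{uv}=A_{uv}$, while the number of $(K-H)$-paths is $(CB)_{uv}$. Using symmetry of $B$ and $C$ I would compute $(CB)_{uv}=\sum_x C_{ux}B_{xv}=\sum_x B_{vx}C_{xu}=(BC)_{vu}=A_{vu}$, and then invoke symmetry of $A$ to conclude $A_{vu}=A_{uv}$. Thus the two path counts coincide. Since $A$ is a $(0,1)$-matrix, $A_{uv}\in\{0,1\}$; so whenever an $(H-K)$-path exists, we have $A_{uv}=1$, and therefore the number of $(K-H)$-paths equals $1$ as well, i.e.\ such a path exists and is unique. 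This is exactly the diamond condition.

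The whole argument is bookkeeping; the one step carrying the weight is recognizing that opposite-colored length-$2$ paths are enumerated by $BC$ and $CB$, so that symmetry of $A$ forces the counts to be equal while the $(0,1)$ condition upgrades "equal and positive'' to "both exactly one.'' I do not anticipate a genuine obstacle beyond recording these identities cleanly.
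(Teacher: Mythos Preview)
Your argument is correct. Both parts follow exactly as you describe: the zero diagonal of $A$ forces edge-disjointness, and the symmetry together with the $(0,1)$ property of $A=BC$ forces the $(H-K)$ and $(K-H)$ path counts to agree and to lie in $\{0,1\}$, which is precisely the diamond condition.

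Note, however, that the paper does not give its own proof of this lemma: it is stated with a citation to \cite[Theorem~1]{Manjunatha} and used without argument. So there is nothing in the present paper to compare your proof against. Your entrywise bookkeeping is the natural (and presumably the original) approach, and it stands on its own.
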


\begin{lemma}{\rm\cite[Corollary 2]{Manjunatha}}\label{deg}
Let $G$ be factored into graphs $H$ and $K$.
If vertices $u$ and $v$ are connected in $H$, then $\dd_K(u)={\dd}_K(v)$.
\end{lemma}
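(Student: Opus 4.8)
The plan is to reduce everything to the case where $u$ and $v$ are \emph{adjacent} in $H$ and then to produce an explicit bijection between the $K$-neighborhoods $N_K(u)$ and $N_K(v)$, using the diamond condition of \cref{dia_con}. Indeed, if $u$ and $v$ are connected in $H$ there is an $H$-path $u = u_0 \edge u_1 \edge \cdots \edge u_k = v$, and once the single-edge case gives $\dd_K(u_i) = \dd_K(u_{i+1})$ for each $i$, transitivity yields $\dd_K(u) = \dd_K(v)$. So the entire content is the statement for one edge.

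First I would record the global facts coming from $A = BC$. Since $A$ is symmetric while $B,C$ are symmetric adjacency matrices, $A = A^{\top} = C^{\top}B^{\top} = CB$, so $A = BC = CB$ and $B,C$ commute. Combined with the hypothesis that $A$ is a $(0,1)$-matrix, this means that for every ordered pair $(i,j)$ there is \emph{at most one} $(H-K)$-path of length $2$ and at most one $(K-H)$-path of length $2$ between them, and (by the diamond condition) an $(H-K)$-path forces a unique $(K-H)$-path. This uniqueness is the engine of the whole argument.

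Now fix an edge $u \edge v$ of $H$. I would define $\phi : N_K(u) \to N_K(v)$ as follows: given $x \in N_K(u)$, the length-$2$ path $v \edge u \edge x$ with $v\edge u \in E(H)$ and $u\edge x \in E(K)$ is an $(H-K)$-path from $v$ to $x$, so by the diamond condition there is a \emph{unique} vertex $w$ with $v\edge w \in E(K)$ and $w\edge x \in E(H)$; set $\phi(x) = w$, noting $w \in N_K(v)$. Symmetrically, define $\psi : N_K(v) \to N_K(u)$ by sending $w \in N_K(v)$ to the unique $y$ with $u\edge y \in E(K)$ and $y \edge w \in E(H)$, which is well-defined because $u \edge v \edge w$ is an $(H-K)$-path from $u$ to $w$. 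I would then verify $\psi \circ \phi = \mathrm{id}$: if $\phi(x) = w$, then $u \edge x \in E(K)$ (as $x \in N_K(u)$) and $x \edge w \in E(H)$ (from the definition of $\phi$) exhibit a $(K-H)$-path from $u$ to $w$, so by uniqueness this is the path defining $\psi(w)$, giving $\psi(w) = x$. The reverse composition $\phi\circ\psi = \mathrm{id}$ is checked identically, so $\phi$ is a bijection and $\dd_K(u) = |N_K(u)| = |N_K(v)| = \dd_K(v)$.

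The point requiring the most care, and the main obstacle to a fully rigorous write-up, is confirming that all the auxiliary vertices are genuine and that each invoked configuration is a legitimate length-$2$ path: edge-disjointness of $H$ and $K$ (\cref{dia_con}(i)) rules out $x = v$, $w = u$, and any coincidence that would collapse an edge into a loop, so the diamond condition applies exactly as stated at every step. Granting this bookkeeping, the single-edge case is complete, and the lemma follows by propagating the equality along an $H$-path as described in the first paragraph.
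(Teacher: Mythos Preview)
The paper does not actually prove this lemma; it is quoted without proof from \cite{Manjunatha}. So there is no in-paper argument to compare against. Your proposal is a correct self-contained proof: reducing to a single $H$-edge and then using the diamond condition of \cref{dia_con} to build mutually inverse maps $\phi : N_K(u) \to N_K(v)$ and $\psi : N_K(v) \to N_K(u)$ is exactly the natural argument, and your uniqueness check for $\psi\circ\phi=\mathrm{id}$ is the right one. The preliminary paragraph about $BC=CB$ is true but not needed for the bijection (the diamond condition already supplies all the uniqueness you invoke), so it could be dropped; the edge-disjointness checks you flag at the end are indeed routine once \cref{dia_con}(i) is in hand.
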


\begin{lemma}{\rm\cite[Theorem 7]{Manjunatha}}\label{thm7}
Let $G$ be factored into graphs $H$ and $K$ and $u$ is a vertex of $G$.
Then $\dd_G(u) = \dd_H(u)\dd_K(u)$.
\end{lemma}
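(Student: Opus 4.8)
The plan is to read off both degrees directly from the matrix identity $A = BC$ and then collapse the resulting sum using \cref{deg}. Since $A$ is the adjacency matrix of the simple graph $G$, its entries lie in $\{0,1\}$ and its diagonal vanishes, so $\dd_G(u)$ is simply the $u$-th row sum $\sum_{j} A_{uj}$. Writing $B=(B_{uw})$ and $C=(C_{wj})$ for the adjacency matrices of $H$ and $K$, I would expand
$$\dd_G(u) = \sum_{j} A_{uj} = \sum_{j} \sum_{w} B_{uw} C_{wj} = \sum_{w} B_{uw} \Bigl( \sum_{j} C_{wj} \Bigr) = \sum_{w} B_{uw}\, \dd_K(w),$$
where the last equality uses that $\sum_{j} C_{wj} = \dd_K(w)$ is the $K$-degree of $w$ (again legitimate because $C$ is a $(0,1)$-matrix with zero diagonal).

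The key step is to observe that only the neighbors of $u$ in $H$ contribute to this sum, since $B_{uw}=1$ exactly when $w \in N_H(u)$. For each such $w$ the edge $u \edge w$ lies in $H$, so $u$ and $w$ are connected in $H$; \cref{deg} then forces $\dd_K(w) = \dd_K(u)$. Substituting this common value and factoring it out of the sum yields
$$\dd_G(u) = \sum_{w \in N_H(u)} \dd_K(w) = \dd_K(u)\cdot |N_H(u)| = \dd_H(u)\,\dd_K(u),$$
which is exactly the claimed identity.

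I do not anticipate a serious obstacle here: the whole argument is a short computation once $\dd_G(u)$ is recognized as a row sum of $A=BC$. The one point requiring care is the invocation of \cref{deg}, which is phrased for vertices \emph{connected} in $H$; I must note that adjacency in $H$ is a special (indeed the strongest) case of connectivity, so it legitimately applies to every $w \in N_H(u)$ and lets me replace each $\dd_K(w)$ by the single value $\dd_K(u)$. It is also worth remarking on the degenerate case $\dd_H(u)=0$: then the sum over $N_H(u)$ is empty and both sides equal $0$, so the formula still holds, even though this case does not arise for vertices of $G$ since $G$ has no isolated vertices.
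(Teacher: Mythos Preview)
Your argument is correct. The paper does not supply its own proof of this lemma; it is simply quoted from \cite{Manjunatha}, so there is nothing to compare against directly. Your row-sum computation together with \cref{deg} gives a clean, self-contained derivation within the paper's framework. One small caveat worth being aware of: since both \cref{deg} and \cref{thm7} are imported from the same source, you might in principle worry about circularity (whether \cite{Manjunatha} deduces its Corollary~2 from its Theorem~7); but as stated in this paper the two are presented as independent facts, so relying on \cref{deg} here is legitimate.
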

\section{Prime Graphs}

We start with the following simple lemma without proof.

\begin{lemma}\label{B2=1}
Let $H$ be a perfect matching with the adjacency matrix $B$. Then $B^2=I$.
\end{lemma}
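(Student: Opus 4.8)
The statement to prove is \cref{B2=1}: if $H$ is a perfect matching with adjacency matrix $B$, then $B^2 = I$.

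\medskip

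The plan is to exploit the extremely rigid structure of a perfect matching. Since $H$ is a perfect matching on $[n]$ (so $n$ is even), every vertex $i$ has degree exactly $1$; write $m(i)$ for its unique neighbour, so that $m$ is a fixed-point-free involution on $[n]$. In terms of the adjacency matrix this says $B_{ij} = 1$ precisely when $j = m(i)$, and each row and each column of $B$ contains exactly one $1$; in other words $B$ is the permutation matrix of the involution $m$. First I would record this: $B = P_m$ where $P_m$ is the permutation matrix with $(P_m)_{ij} = [\,j = m(i)\,]$.

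\medskip

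Next I would compute $B^2$ directly from the entry formula, $ (B^2)_{ij} = \sum_{k=1}^n B_{ik} B_{kj}. $ The only nonzero term in this sum occurs at $k = m(i)$, and it contributes $B_{m(i),j}$, which equals $1$ exactly when $j = m(m(i)) = i$ and $0$ otherwise. Hence $(B^2)_{ij} = [\,i = j\,]$, i.e. $B^2 = I$. Equivalently, and perhaps cleaner to state: a perfect matching is a disjoint union of edges, so after reordering vertices so that matched pairs are consecutive, $B$ is block-diagonal with each block equal to $\left(\begin{smallmatrix}0&1\\1&0\end{smallmatrix}\right)$; since that $2\times 2$ block squares to the $2\times 2$ identity, the whole matrix squares to $I$. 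One may also simply note $B$ is symmetric ($B = B^\top$) and a permutation matrix (so $B B^\top = I$), whence $B^2 = I$.

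\medskip

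There is essentially no obstacle here — this is the "simple lemma without proof" the excerpt flags as such — so the only thing to be careful about is making sure the matching is \emph{perfect}: if some vertex were unmatched, the corresponding row of $B$ would be zero and $B^2$ would have a zero on the diagonal, so $B^2 \ne I$. The hypothesis that $H$ has no isolated vertices (the standing assumption of the paper) together with "perfect matching" guarantees $m$ is a genuine fixed-point-free involution, which is exactly what the computation needs. I would present the block-diagonal version as the one-line proof.
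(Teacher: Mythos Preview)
Your argument is correct: $B$ is the permutation matrix of the fixed-point-free involution $i\mapsto m(i)$ determined by the matching, and your entry computation (or the block-diagonal or $B=B^\top$ with $BB^\top=I$ variants) cleanly gives $B^2=I$. The paper itself states this lemma ``without proof,'' so there is no approach to compare against; your write-up would serve perfectly well as the omitted justification.
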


\noindent Any adjacency matrix of a perfect matching is a \emph{permutation matrix}, which means that in every row and every column there is exactly one 1. In addition, every $n \times n$ permutation matrix corresponds to a permutation on $S_n$, which is a product of disjoint transpositions. Each transposition corresponds to an edge of the matching. Thus, we have the following corollary:

\begin{corollary}\label{aut_H}
Let $H$ be a perfect matching with the adjacency matrix $B$.
Then $B$ can be considered as an automorphism of $H$.
\end{corollary}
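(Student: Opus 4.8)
The plan is to make precise in what sense the permutation matrix $B$ "is" an automorphism, and then to verify the defining property of a graph automorphism. Since $H$ is a perfect matching on $[n]$, its adjacency matrix $B$ has exactly one $1$ in each row and in each column, so $B$ is a permutation matrix; I would let $\sigma \in S_n$ be the permutation with $\sigma(i)=j$ whenever $B_{ij}=1$. By \cref{B2=1} we have $B^2=I$, so $\sigma$ is an involution, and since $H$ is $1$-regular it has no fixed points; hence $\sigma$ is a product of $n/2$ disjoint transpositions, one for each edge of $H$, which is the content already indicated in the paragraph preceding the statement.

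Next I would check directly that $\sigma \in \operatorname{Aut}(H)$. For any edge $i \edge \sigma(i)$ of $H$, its image under $\sigma$ is the pair $\{\sigma(i),\sigma^2(i)\} = \{\sigma(i),i\}$, which is again an edge of $H$; thus $\sigma$ maps edges to edges. Since $\sigma$ is a bijection of the finite vertex set $V(H)$ and $E(H)$ is finite, $\sigma$ also maps non-edges to non-edges, so it preserves both adjacency and non-adjacency, which is exactly the condition for being a graph automorphism of $H$.

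There is essentially no obstacle here: the only point requiring a little care is the bookkeeping that the transposition structure of $\sigma$ coincides with the edge set of $H$, so that $\sigma$ fixes each edge of $H$ setwise rather than permuting $V(H)$ arbitrarily. The resulting statement is consistent with \cref{B2=1} and may be regarded as a structural refinement of it: a permutation (adjacency) matrix squaring to $I$ corresponds precisely to an automorphism of order dividing $2$, namely the one that swaps the two endpoints of every edge of the matching.
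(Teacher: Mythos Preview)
Your proposal is correct and follows essentially the same approach as the paper: the paper's justification is the short paragraph preceding the corollary, which notes that $B$ is a permutation matrix whose associated permutation is a product of disjoint transpositions, one per edge of $H$. Your argument is simply a careful expansion of that observation, explicitly verifying that the resulting involution $\sigma$ preserves adjacency in $H$.
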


\noindent Let $G$, $H$, and $K$ be graphs with the adjacency matrices $A$, $B$, and $C$ respectively, such that $A = BC$ and $B^2 = I$.

Since $V(H) = V(G)$, then $B$ can be considered as a map from $V(G)$ to $V(G)$.
We next show that $B$ also can be regarded as an automorphism of $G$.
But we need the following lemma.

\begin{lemma}\label{ABA}
Let $H$ be a perfect matching with the adjacency matrix $B$. 
If $G$ is a graph with the adjacency matrix $A$, then $BAB$ represents a relabeling of $G$.
\end{lemma}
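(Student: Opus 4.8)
The plan is to unpack what $B$ is and then recognize $BAB$ as an ordinary conjugation of $A$ by a permutation matrix. Since $H$ is a perfect matching, its adjacency matrix $B$ is a symmetric permutation matrix, and by \cref{B2=1} it satisfies $B^2 = I$; hence $B = B^{-1} = B^{\top}$. By \cref{aut_H}, $B$ corresponds to a permutation $\pi \in S_n$ that is a product of disjoint transpositions, one for each edge of the matching $H$; in particular $\pi$ is an involution, so $\pi = \pi^{-1}$.

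The key step is the standard fact that for a permutation matrix $P_\pi$ and any square matrix $M$, the product $P_\pi M P_\pi^{\top}$ simultaneously permutes the rows and columns of $M$ according to $\pi$; concretely, $(P_\pi M P_\pi^{\top})_{ij} = M_{\pi^{-1}(i)\,\pi^{-1}(j)}$. I would verify this directly from the definition of matrix multiplication, or simply cite it as the well-known relation between conjugation by permutation matrices and relabeling of indices. Applying it with $M = A$ and $P_\pi = B$, and using $B = B^{\top}$, we get
\[
(BAB)_{ij} = A_{\pi^{-1}(i)\,\pi^{-1}(j)} = A_{\pi(i)\,\pi(j)},
\]
where the last equality uses that $\pi$ is an involution.

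From this it follows that $BAB$ is a $(0,1)$-symmetric matrix with zero diagonal whose $(i,j)$ entry records whether $\pi(i)$ and $\pi(j)$ are adjacent in $G$; equivalently, $BAB$ is the adjacency matrix of $G$ after renaming each vertex $v$ to $\pi(v)$. Thus $BAB$ represents the graph $G$ under the relabeling induced by the matching $H$, which is exactly the claim. I expect no genuine obstacle here: the only point requiring care is to observe that, because $H$ is a \emph{perfect} matching on all $n$ vertices, $B$ is a bona fide permutation matrix (no zero rows or columns) and is symmetric, so that $BAB$ really is conjugation $B A B^{-1}$ rather than some more general product.
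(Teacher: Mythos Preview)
Your proposal is correct and follows essentially the same approach as the paper: observe that $B$ is a (symmetric) permutation matrix with $B=B^{-1}=B^{\top}$, so $BAB=BAB^{-1}$ is conjugation by a permutation, which amounts to relabeling the vertices of $G$ by swapping each vertex with its matched partner. Your version is slightly more detailed in writing out the entrywise identity $(BAB)_{ij}=A_{\pi(i)\pi(j)}$, but the underlying argument is identical.
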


\begin{proof}
By \Cref{B2=1}, $B^2 = I$, so $B = B^{-1}$. Since $H$ is a perfect matching, $B$ is a permutation matrix. Thus $BAB = BAB^{-1} = BAB^T$, which means that $BAB$ relabels $G$ by swapping each vertex with its matched partner in $H$.
\end{proof}

\noindent Every automorphism of a graph $G$ can be regarded as a permutation matrix. Keeping this fact in mind, we prove the following theorem.

\begin{theorem}\label{B_aut_G}
Let $G$ be a graph and $H$ be a perfect matching with the adjacency matrix $B$.
Then $G$ is factored into $H$ and some $K$ if and only if $B$ is an automorphism of $G$ without any fixed edge.
\end{theorem}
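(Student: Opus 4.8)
The plan is to prove both directions by unwinding the matrix identity $A = BC$ together with the fact that $B$ is a permutation matrix with $B^2 = I$ (Lemma \ref{B2=1}). For the forward direction, suppose $G = HK$ with $H$ a perfect matching, so $A = BC$. Multiplying on the left by $B$ and using $B^{-1} = B$ gives $C = BA$. Since $C$ must be a symmetric $(0,1)$-matrix with zero diagonal (it is the adjacency matrix of the graph $K$), we read off three conditions: $BA = (BA)^T = A^T B^T = AB$, so $B$ commutes with $A$; the diagonal of $BA$ is zero; and $BA$ is $(0,1)$. The commutation $BA = AB$ together with $B = B^{-1}$ gives $BAB^{-1} = A$, i.e. $BAB^T = A$, which is exactly the statement that the permutation encoded by $B$ is an automorphism of $G$ (here I invoke Lemma \ref{ABA}: $BAB$ is a relabeling of $G$, and the relabeled matrix equals $A$). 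The zero-diagonal condition on $BA = C$ says that for each vertex $u$, the $(u,u)$ entry of $BA$ is $0$; since $B$ sends $u$ to its matched partner $u'$, this entry counts whether $u' \in N_G(u)$, so it vanishes precisely when no matched edge $u\edge u'$ lies in $G$ — that is, $B$ has no fixed edge. (The $(0,1)$ condition on $BA$ is then automatic, since $BA$ is a row permutation of the $(0,1)$-matrix $A$.)

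For the reverse direction, assume $B$ is an automorphism of $G$ with no fixed edge. Automorphism means $BAB^T = A$, hence $BAB = A$ (as $B^T = B$), hence $AB = B^{-1}A = BA$; equivalently $A = B(BA)$. Set $C := BA$. Then $A = BC$ by construction, so it remains to check that $C$ is the adjacency matrix of a genuine simple graph $K$: that $C$ is symmetric, has zero diagonal, and is $(0,1)$. Symmetry: $C^T = A^T B^T = AB = BA = C$, using the commutation just derived and symmetry of $A$. Zero-diagonal: the $(u,u)$ entry of $BA$ equals $A_{u'u}$ where $u' = B(u)$ is the matched partner of $u$; this is $1$ iff $u\edge u' \in E(G)$, i.e. iff $B$ fixes the edge $uu'$, which is excluded by hypothesis, so the diagonal is zero. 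Finally $C = BA$ is obtained from $A$ by permuting rows, so all entries remain in $\{0,1\}$. Thus $C$ is a valid adjacency matrix of some graph $K$, and $G$ is factored into $H$ and $K$.

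I expect the main subtlety — though not a deep obstacle — to be the bookkeeping translating "no fixed edge of the permutation $B$" into the zero-diagonal condition on $BA$, and making sure the correspondence between $B$-as-matrix and $B$-as-permutation (via Corollary \ref{aut_H}) is stated cleanly in both directions. Everything else is a short manipulation of the identity $C = BA$ using $B^2 = I$ and the symmetry of $A$.
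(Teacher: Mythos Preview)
Your proposal is correct and follows essentially the same route as the paper: both directions hinge on the identity $C = BA$ (from $B^2 = I$), the equivalence between $BA = AB$ and $B$ being an automorphism of $G$, and the translation of ``no fixed edge'' into the zero-diagonal condition on $BA$. The only cosmetic difference is that in the forward direction the paper derives the absence of fixed edges by appealing to \Cref{dia_con} (an edge fixed by $B$ would lie in both $G$ and $H$), whereas you read it off directly from the $(u,u)$ entry of $C = BA$; these are the same computation in different clothing.
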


\begin{proof}
Let $A$ and $C$ be the adjacency matrices of graphs $G$ and $K$ so that $A = BC = CB$.
Since $B^2 = I$, $BA = AB = C$. Thus $B$ is an automorphism of $G$.
For the sake of contradiction, suppose $B$ fixes some edge $\{u,v\}$ of $G$ (so $Bu = v$ and $Bv = u$). Then $\{u,v\}$ is an edge of both $G$ and $H$, contradicting \Cref{dia_con} (which states that $G$ and $H$ have no edge in common). 

Conversely, assume $B$ is an automorphism of $G$ with no fixed edge. Then $BA = AB$. Because $B$ and $A$ are symmetric matrices, $BA$ is also symmetric. Moreover, since $B$ fixes no edge of $G$, every diagonal entry of $BA$ is $0$ (no edge of $G$ appears in both $G$ and $H$). Thus $BA$ is a symmetric $\{0,1\}$-matrix with zero diagonal, which we take as the adjacency matrix of a graph $K$. By construction $A = B(BA) = B C$, so indeed $G$ is factored into $H$ and $K$.
\end{proof}

\begin{notation}
Let $G$ be a graph and $u$ be a vertex of $G$. Then $C_G(u)$ denotes the component of $G$ containing $u$.
\end{notation}

\begin{definition}  
  Let $G$ and $H$ be graphs with $U \subseteq V(G)$.  
  A map $\varphi\colon U \to V(H)$ is a \defin{graph homomorphism} if $u \edge v \in E(G)$ implies $\varphi(u) \edge \varphi(v) \in E(H)$ for all $u, v \in U$.  
\end{definition}  

\begin{lemma}  
  Let $G$ be factored into $H$ and $K$ with a matched pair $(u,v)$.  
  For $s \in \{u,v\}$, the map $\varphi\colon C_K(s) \to N_H(C_K(s))$, defined by $\varphi(x) = N_H(x)$, is a graph homomorphism.  
\end{lemma}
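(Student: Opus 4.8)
The plan is to first make sense of $\varphi$ as a map, and only then verify edge-preservation, which will take a single application of the diamond condition. The one genuinely delicate point is why $\varphi(x)$ is a single vertex: I would show that $\dd_H \equiv 1$ on $C_K(s)$. Since $A = BC$ is symmetric and $B, C$ are symmetric, $A = A^{T} = C^{T}B^{T} = CB$, so $G$ is also factored into $K$ and $H$; applying \Cref{deg} to this factorization shows that $\dd_H$ is constant on every component of $K$. By the relabeling built into \Cref{matched_pair} we have $\dd_H(u) = \dd_H(v) = 1$, hence $\dd_H(s) = 1$, hence $\dd_H \equiv 1$ on $C_K(s)$. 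So $N_H(x)$ is a single vertex, which I call $x'$, and $\varphi(x) = x'$ is well defined.

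Next I would take an edge $x\edge y$ of $C_K(s)$, that is, $x\edge y \in E(K)$ with $x, y \in C_K(s)$, and show that $x'\edge y' \in E(K)$. Since $x'\edge x \in E(H)$ (because $x' = N_H(x)$) and $x\edge y \in E(K)$, there is an $(H-K)$-path of length $2$ from $x'$ to $y$, so \Cref{dia_con} supplies a unique $(K-H)$-path from $x'$ to $y$, say $x'\edge z\edge y$ with $x'\edge z \in E(K)$ and $z\edge y \in E(H)$. As $\dd_H(y) = 1$, the only $H$-neighbour of $y$ is $y' := N_H(y)$, which forces $z = y'$; hence $x'\edge y' \in E(K)$, and since $K$ is simple this is a genuine edge, so $x'\ne y'$. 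Thus $\varphi(x)$ and $\varphi(y)$ are distinct vertices of $N_H(C_K(s))$ joined by an edge of $K$, which is the required conclusion once $N_H(C_K(s))$ is read as the subgraph of $K$ induced on that vertex set.

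I do not expect a real obstacle. The degenerate cases are handled cheaply: $x = y$ cannot occur since $x\edge y$ is an edge, and $x' = y$ would put the edge $x\edge y$ into both $H$ and $K$, contradicting their edge-disjointness (\Cref{dia_con}(i)); the case $x' = y'$ is ruled out automatically, as noted, because $x'\edge y'$ is an edge of the loopless graph $K$. The only point worth stating explicitly is that the diamond step delivers \emph{$K$-}edges between images, so the graph structure intended on $N_H(C_K(s))$ is the one induced from $K$; with that fixed, the single diamond-condition step does all the work.
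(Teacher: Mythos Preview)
Your proof is correct and takes essentially the same approach as the paper's: show $\dd_H\equiv 1$ on $C_K(s)$ via \Cref{deg} to make $\varphi$ well-defined, then apply the diamond condition once to carry a $K$-edge $x\edge y$ to a $K$-edge between the $H$-neighbours $x',y'$. Your explicit observation that $A=BC=CB$ (so \Cref{deg} may be applied with the roles of $H$ and $K$ swapped) is in fact more careful than the paper, which invokes \Cref{deg} directly.
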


\begin{proof}  
  By \Cref{deg}, every $x \in C_K(u) \cup C_K(v)$ has a unique neighbor in $H$, so $\varphi(x) = N_H(x)$ is well-defined.
  In particular, $\varphi(u) = v$ and $\varphi(v) = u$. To prove that $\varphi$ is a graph homomorphism, let $a, b \in C_K(s)$ be adjacent in $K$. 
  Since each vertex has a unique neighbor in $H$, we obtain an alternating path $N_H(a) \blueedge a \rededge b \blueedge N_H(b)$. By the diamond condition, this implies $N_H(a) \rededge N_H(b)$, proving that $\varphi(a) \edge \varphi(b) \in E(N_H(C_K(s)))$.
\end{proof}  

\begin{corollary}  
We have $C_K(u) = \varphi(C_K(v))$ and $\varphi(C_K(u)) = C_K(v)$.  
\end{corollary}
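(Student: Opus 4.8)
The plan is to show that $\varphi$ restricts to an \emph{involution} on $C_K(u)\cup C_K(v)$, and then to combine this with the homomorphism property established in the previous lemma, which forces $\varphi$ to carry connected subgraphs of $K$ to connected subgraphs of $K$.

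First I would exploit the symmetry of the factorization between $H$ and $K$: since $A$, $B$, $C$ are symmetric and $A=BC$, we also have $A=A^{T}=CB$, so $G$ is factored into $K$ and $H$ as well. Consequently \Cref{deg} applies with the roles of $H$ and $K$ interchanged, giving that any two vertices lying in the same component of $K$ have the same degree in $H$. Since $(u,v)$ is a matched pair with $\dd_H(u)=\dd_H(v)=1$, it follows that \emph{every} vertex of $C_K(u)$ and of $C_K(v)$ has degree exactly $1$ in $H$; this is precisely what makes $\varphi(x)=N_H(x)$ well defined on $C_K(u)\cup C_K(v)$, as already used in the proof of the preceding lemma.

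Next I would check that $\varphi$ is an involution on $C_K(u)\cup C_K(v)$: for such an $x$ the edge $x\edge\varphi(x)$ lies in $H$, and $\varphi(x)$ itself has $H$-degree $1$, so its unique $H$-neighbour is $x$, i.e.\ $\varphi(\varphi(x))=x$. In particular $\varphi$ is injective on this set, and $\varphi(u)=v$, $\varphi(v)=u$. Now invoke the previous lemma: $\varphi$ restricted to $C_K(v)$ is a graph homomorphism into $K$, so the image $\varphi(C_K(v))$ is a connected subgraph of $K$; it contains $\varphi(v)=u$, hence $\varphi(C_K(v))\subseteq C_K(u)$, and symmetrically $\varphi(C_K(u))\subseteq C_K(v)$. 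Applying the involution $\varphi$ to the second inclusion yields
\[
C_K(u)=\varphi\big(\varphi(C_K(u))\big)\subseteq \varphi(C_K(v))\subseteq C_K(u),
\]
so all the inclusions are equalities, and the identity $\varphi(C_K(u))=C_K(v)$ follows the same way.

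The step I expect to require the most care is the transfer of \Cref{deg} via the $H$–$K$ symmetry — making sure that \emph{all} vertices of both $K$-components have $H$-degree $1$, so that $\varphi$ and its square genuinely make sense — together with the harmless edge case $C_K(u)=C_K(v)$: there the same computation shows that $\varphi$ restricts to an involutory automorphism of that single component, and the claimed equalities still hold.
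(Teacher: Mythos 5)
Your proof is correct and follows essentially the same route as the paper's: use the homomorphism property to get $\varphi(C_K(s))\subseteq C_K(\varphi(s))$, then apply $\varphi$ again and use $\varphi^2=\mathrm{id}$ to squeeze the inclusions into equalities. You merely spell out the supporting details (the transfer of \cref{deg} to $K$-components via the symmetry $A=CB$, and the involution property of $\varphi$) that the paper leaves implicit.
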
  
  
\begin{proof}  
Since $\varphi(C_K(s)) \subseteq C_K(\varphi(s))$, applying $\varphi$ again gives $\varphi^2(C_K(s)) \subseteq \varphi(C_K(\varphi(s)))$.  
By the definition of $\varphi$, we have $\varphi^2(C_K(s)) = C_K(s)$, which implies $\varphi(C_K(s)) = C_K(\varphi(s))$.  
Thus, $N_H(C_K(s)) = C_K(N_H(s))$.  
\end{proof}

\begin{corollary}\label{aut}
$\varphi\colon C_K(u) \to C_K(v)$ is an isomorphism, and moreover $N_H(C_K(u))=C_K(v)$ and $N_H(C_K(v))=C_K(u)$.
\end{corollary}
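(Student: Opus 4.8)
The plan is to combine the preceding lemma and corollary with the fact that $N_H$ acts as an involution on the components $C_K(u)$ and $C_K(v)$. First I would record the consequences of $(u,v)$ being a matched pair: by \Cref{matched_pair} we may assume $\dd_H(u)=\dd_H(v)=1$ and $v\in N_H(u)$, so $N_H(u)=\{v\}$ and (by symmetry of adjacency) $N_H(v)=\{u\}$; moreover, by \Cref{deg} every vertex of $C_K(u)\cup C_K(v)$ has $H$-degree $1$, so $\varphi(x)=N_H(x)$ is a single well-defined vertex for each such $x$. Applying the preceding corollary with $s\in\{u,v\}$ then gives $N_H(C_K(u))=C_K(N_H(u))=C_K(v)$ and $N_H(C_K(v))=C_K(u)$, which is exactly the ``moreover'' part of the statement.

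Next I would show that $\varphi$ restricted to $C_K(u)$ is a bijection onto $C_K(v)$. The preceding lemma already gives $\varphi(C_K(u))\subseteq N_H(C_K(u))=C_K(v)$ and likewise $\varphi(C_K(v))\subseteq C_K(u)$, so $\varphi$ maps $C_K(u)$ into $C_K(v)$ and vice versa. For any $x\in C_K(u)$, the vertex $\varphi(x)=N_H(x)$ has $x$ among its $H$-neighbors, and since $\dd_H(\varphi(x))=1$ this $H$-neighbor is unique; hence $\varphi(\varphi(x))=x$, and symmetrically $\varphi(\varphi(y))=y$ for all $y\in C_K(v)$. Thus $\varphi\colon C_K(u)\to C_K(v)$ and $\varphi\colon C_K(v)\to C_K(u)$ are mutually inverse bijections.

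Finally I would upgrade ``bijective homomorphism'' to ``isomorphism''. By the preceding lemma, both $\varphi\colon C_K(u)\to C_K(v)$ and $\varphi\colon C_K(v)\to C_K(u)$ are graph homomorphisms with respect to the edge set of $K$. Hence, for $a,b\in C_K(u)$, if $\varphi(a)\edge\varphi(b)\in E(K)$ then applying the homomorphism on $C_K(v)$ yields $a=\varphi(\varphi(a))\edge\varphi(\varphi(b))=b\in E(K)$; combined with the forward implication, $\varphi$ both preserves and reflects adjacency, so it is an isomorphism $C_K(u)\to C_K(v)$.

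I do not anticipate a genuine obstacle here: every ingredient is already in place. The only point requiring care is invoking \Cref{deg} to guarantee that $\varphi$ is single-valued on \emph{all} of $C_K(u)\cup C_K(v)$, not merely at $u$ and $v$; this is precisely what makes $\varphi\circ\varphi$ the identity map rather than merely a relation, and hence what turns the bijective homomorphism into an isomorphism.
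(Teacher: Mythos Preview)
Your proposal is correct and matches the paper's intended line of reasoning: the paper states this corollary without an explicit proof, leaving it to follow directly from the preceding lemma (that $\varphi$ is a homomorphism on each $C_K(s)$) and the preceding corollary (that $\varphi(C_K(u))=C_K(v)$ and $\varphi(C_K(v))=C_K(u)$, already using $\varphi^2=\mathrm{id}$). Your argument simply supplies those details---the involution property of $\varphi$ via $\dd_H\equiv 1$ on $C_K(u)\cup C_K(v)$, bijectivity, and edge-reflection from the homomorphism in the reverse direction---exactly as the paper expects the reader to fill in.
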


\begin{definition}
A subset $U$ of $V(G) = V(H \oplus K)$ is called \defin{alone} if every vertex $u'$ of $V(G) \sm U$ has no neighbors in $U$ in  $H \oplus K$ and $G$, i.e. $N_{G}(u')\cap U=N_{H\oplus K}(u')\cap U=\emptyset $.
\end{definition}

\begin{lemma}\label{alone}
Let a graph $G$ be factored into $H$ and $K$ with a matched pair $(u,v)$.
Then $C_K(u)\cup C_K(v)$ is alone.
\end{lemma}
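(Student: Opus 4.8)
The plan is to show that no edge of $G$ and no edge of $H \oplus K$ crosses the boundary between $W := C_K(u) \cup C_K(v)$ and its complement. First I would handle edges of $K$: since $C_K(u)$ and $C_K(v)$ are (by definition) connected components of $K$, no edge of $K$ leaves $W$, so every vertex outside $W$ has no $K$-neighbor in $W$. Next I would handle edges of $H$: by \Cref{aut} we have $N_H(C_K(u)) = C_K(v)$ and $N_H(C_K(v)) = C_K(u)$, so the $H$-neighborhood of $W$ is contained in $W$ itself; hence no edge of $H$ joins $W$ to its complement, and no vertex outside $W$ has an $H$-neighbor inside $W$. Together these give $N_{H\oplus K}(u') \cap W = \emptyset$ for every $u' \notin W$.

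It remains to rule out edges of $G$ crossing the boundary. Suppose for contradiction that some $x \in W$ is adjacent in $G$ to some $y \notin W$. Recall $G = HK$, so the edge $x \edge y$ of $G$ arises from an $(H-K)$-path of length $2$ in $H \oplus K$, say $x \blueedge w \rededge y$ (or the reverse, $x \rededge w \blueedge y$; by symmetry of the two cases it suffices to treat one). In the first case $w \in N_H(x) \subseteq W$ by the previous paragraph, and then $y \in N_K(w) \subseteq W$ since $W$ is a union of $K$-components — contradicting $y \notin W$. In the second case $w \in N_K(x) \subseteq W$, and then $y \in N_H(w) \subseteq W$, again a contradiction. (One should also note that the degree-$1$ vertices $u,v$ in $H$, being a matched pair, do not create extra $H$-edges leaving $W$; this is already subsumed in the statement $N_H(C_K(u)) = C_K(v)$ from \Cref{aut}.)

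The only subtlety — the step I expect to need the most care — is making sure the two "halves" $C_K(u)$ and $C_K(v)$ really do close up under both $N_H$ and taking $K$-components \emph{simultaneously}, i.e. that the isomorphism $\varphi$ from \Cref{aut} does not secretly send a vertex of $C_K(u)$ to an $H$-neighbor lying in a \emph{third} component of $K$. But \Cref{aut} states exactly $N_H(C_K(u)) = C_K(v)$ and $N_H(C_K(v)) = C_K(u)$ on the nose (not merely up to inclusion), so $W$ is genuinely invariant under $N_H$, and the argument goes through. I would therefore present the proof as: (1) $K$-edges stay inside $W$ by definition of components; (2) $H$-edges stay inside $W$ by \Cref{aut}; (3) hence $G$-edges stay inside $W$ because every $G$-edge factors through a length-$2$ alternating path whose middle and far vertices are forced into $W$ by (1) and (2). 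This yields $N_G(u') \cap W = N_{H\oplus K}(u') \cap W = \emptyset$ for all $u' \in V(G) \setminus W$, which is precisely the assertion that $W$ is alone.
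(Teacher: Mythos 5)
Your proof is correct and follows essentially the same route as the paper's: $K$-edges cannot leave $W$ because $C_K(u),C_K(v)$ are $K$-components, $H$-edges cannot leave $W$ by \Cref{aut}, and $G$-edges cannot leave $W$ because each one is realized by an alternating length-$2$ path in $H\oplus K$. The only cosmetic difference is that you trace a hypothetical crossing $G$-edge through its alternating path explicitly, whereas the paper phrases the same fact as ``every component of $G$ is contained in a component of $H\oplus K$.''
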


\begin{proof}
According to the diamond condition, every edge of $G$ corresponds to an alternating path in $H \oplus K$. This implies that every component of $G$ is a subset of a component of $H \oplus K$. Therefore, it is sufficient to show that $C_K(u) \cup C_K(v)$ is alone in $H \oplus K$.
Let $x \in V(G) \sm V(C_K(u) \cup C_K(v))$. We need to show that $N_{H \oplus K}(x) \cap (C_K(u) \cup C_K(v)) = \emptyset$.
By \cref{aut}, we know that $N_H(C_K(u)) = C_K(v)$ and $N_H(C_K(v)) = C_K(u)$. Therefore, $x$ cannot be adjacent to any vertex in $C_K(u) \cup C_K(v)$ in $H$.
Furthermore, $C_K(u)$ and $C_K(v)$ are components of $K$, and thus there is no edge in $K$ between $x$ and $C_K(u) \cup C_K(v)$.
\end{proof}

\begin{definition}
Let $G$ be a graph. Then $G$ is \defin{prime} if, for every factorization of $G$, one of the factors is a perfect matching.
\end{definition}
\noindent By definition, A graph without any factorization is prime.

\begin{remark}\label{even}
If a prime graph $G$ is factorable, then $G$ has an even number of vertices.
\end{remark}

\noindent We now are ready to prove the main theorem of this section.

\mainnn*

\begin{proof}
If $G$ factors into $H$ and $K$ with $H$ a perfect matching, then trivially the endpoints of each $H$-edge form a matched pair (as desired).\\
For the backward implication, assume that $G$ factors into $H$ and $K$. Consider any vertex $x$ in $H$, and let $C$ be the component of $G$ containing $x$. By assumption, there is a matched pair ${u, v}$ such that $u \in C$.
By \Cref{alone}, $C_K(u) \cup C_K(v)$ is alone, which implies that $x \in C_K(u) \cup C_K(v)$.
By \Cref{deg}, $\dd_H(z) = 1$ for every $z \in C_K(u) \cup C_K(v)$, it follows that $\dd_H(x) = 1$, which implies that $H$ is a perfect matching.
\end{proof}


\section{Applications of Prime Graphs}

\noindent We now apply our prime-graph results to broader classes: first graphs without $C_4$ (yielding factorable forests), then grids and torus graphs.

\subsection{Square-free graphs}
In this subsection, we characterize all factorable forests.
We start with the following lemma.

\begin{lemma}\label{degree=1}{\rm{\cite[Lemma 19]{spectral}}}
Let a graph $G$ be factored into two graphs $H$ and $K$. If $G$ has no $C_4$, then for every vertex $u \in V(G)$, either $\dd_K(u) = 1$ or $\dd_H(u) = 1$.
\end{lemma}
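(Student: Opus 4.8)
The plan is to argue by contradiction. Suppose some vertex $u$ has $\dd_H(u)\ge 2$ and $\dd_K(u)\ge 2$; I will produce a $4$-cycle in $G$. First note that $G$ has no isolated vertices and $\dd_G(u)=\dd_H(u)\dd_K(u)$ by \Cref{thm7}, so $\dd_H(u),\dd_K(u)\ge 1$; hence the only way the conclusion can fail at $u$ is the case just assumed. Fix distinct $H$-neighbors $b_1,b_2$ of $u$ and distinct $K$-neighbors $r_1,r_2$ of $u$. Since $H$ and $K$ are edge-disjoint by \cref{dia_con}, each $u\edge b_i$ is an edge of $H$ only and each $u\edge r_j$ an edge of $K$ only, so $\{b_1,b_2\}\cap\{r_1,r_2\}=\emptyset$; together with $b_1\ne b_2$, $r_1\ne r_2$, and $b_i,r_j\ne u$ (as $G$ is simple), this shows that $b_1,b_2,r_1,r_2$ are four distinct vertices.

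Next I would show that $b_i\edge r_j\in E(G)$ for all $i,j\in\{1,2\}$. The walk $b_i\blueedge u\rededge r_j$ is an $(H-K)$-path of length $2$ from $b_i$ to $r_j$ in $H\oplus K$, so the $(b_i,r_j)$-entry of $A=BC$ is at least $1$; as $A$ is a $(0,1)$-matrix, this entry equals $1$, i.e.\ $b_i\edge r_j\in E(G)$. (Alternatively, one may read this off the diamond condition in \cref{dia_con}.) Therefore $b_1\edge r_1\edge b_2\edge r_2\edge b_1$ is a $4$-cycle in $G$ on the distinct vertices $b_1,r_1,b_2,r_2$, contradicting the assumption that $G$ has no $C_4$. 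Hence $\dd_H(u)=1$ or $\dd_K(u)=1$.

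Since the statement is essentially a one-step deduction, there is no serious obstacle; the two points requiring care are (a) verifying that the four vertices $b_1,b_2,r_1,r_2$ are genuinely distinct — this is exactly where edge-disjointness of $H$ and $K$ enters — and (b) being explicit that the $0/1$ nature of $A$ promotes ``there exists an $(H-K)$-path'' to ``an edge of $G$''. If one prefers not to lean on the global no-isolated-vertex hypothesis, the same argument applies verbatim to any vertex $u$ with $\dd_G(u)\ge 1$, since \cref{thm7} then forces both $\dd_H(u)\ge 1$ and $\dd_K(u)\ge 1$.
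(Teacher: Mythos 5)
Your proof is correct. Note that the paper itself does not prove this lemma --- it is imported verbatim as \cite[Lemma 19]{spectral} --- so there is no in-paper argument to compare against; your contrapositive (two $H$-neighbours and two $K$-neighbours of $u$ are forced to be four distinct vertices by edge-disjointness of $H$ and $K$, and the $(0,1)$-entries of $A=BC$ then force all four cross edges $b_i\edge r_j$, yielding an explicit $C_4$) is the natural and standard one, and you correctly flag the two points that need care (distinctness of the four vertices, and the promotion of ``at least one $(H-K)$-path'' to ``exactly one, hence an edge of $G$'').
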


\begin{lemma}\label{no-c4}  
  Every graph with no $C_4$ is prime.
\end{lemma}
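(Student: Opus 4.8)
The plan is to combine Lemma~\ref{degree=1} with the matched-pair characterization in Theorem~\ref{self-match}. Suppose $G$ has no $C_4$ and factors into $H$ and $K$; by Theorem~\ref{self-match} it suffices to produce, in every component $C$ of $G$, a matched-pair endpoint. Fix a component $C$ and recall that by Lemma~\ref{deg}, $\dd_K$ is constant on $C$ (vertices connected in $H$ have equal $K$-degree, but actually we want the statement for vertices connected in $G$ — we get this because each edge of $G$ lifts to an alternating $H$--$K$ path, so being connected in $G$ forces equal degrees along both sides). Similarly $\dd_H$ is constant on $C$. Write $\dd_H(x) \equiv p$ and $\dd_K(x) \equiv q$ for all $x \in C$. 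By Lemma~\ref{degree=1}, every vertex of $C$ has $\dd_H = 1$ or $\dd_K = 1$, so $p = 1$ or $q = 1$.

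Without loss of generality say $p = \dd_H(x) = 1$ for all $x \in C$. Now I want a matched pair inside $C$. Pick any edge $u \edge v$ of $H$ with $u \in C$; since $\dd_H(u) = 1$, the unique $H$-neighbor of $u$ is $v$, and by Lemma~\ref{dia_con}(i) the edge $uv$ of $H$ is not an edge of $K$, while the alternating-path argument shows $v$ lies in the same $G$-component, hence $v \in C$ and $\dd_H(v) = 1$ as well. Then $\dd_H(u) = \dd_H(v) = 1$ with $v \in N_H(u)$, which is exactly condition (1) of Definition~\ref{matched_pair}: $(u,v)$ is a matched pair with both endpoints in $C$. (The one thing to check is that $H$ restricted to $C$ genuinely contains an edge, i.e. that $C$ is not $H$-edgeless; but every vertex of $C$ has $\dd_H = 1 \neq 0$, so $H$ has edges incident to $C$, and since $\dd_H(x)=1$ forces the matched partner back into $C$, such an edge exists.) If instead $q = 1$ on $C$, the symmetric argument using condition (2) of Definition~\ref{matched_pair} applies.

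Thus every component of $G$ contains a matched-pair endpoint in the given factorization; since the factorization was arbitrary, Theorem~\ref{self-match} yields that $G$ is prime. The main obstacle — really the only subtle point — is justifying that $\dd_H$ and $\dd_K$ are each constant on a whole $G$-component (not just on $H$- or $K$-components), and that a vertex of $C$ with $\dd_H=1$ has its $H$-partner again in $C$; both follow from the observation, already used in the proof of Lemma~\ref{alone}, that every edge of $G$ lifts to an alternating path in $H \oplus K$, so $G$-components refine into $H\oplus K$-components. Everything else is a direct bookkeeping of degrees against Definition~\ref{matched_pair}.
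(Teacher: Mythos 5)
There is a genuine gap: the claim that $\dd_H$ and $\dd_K$ are each constant on a whole $G$-component is false, and your argument leans on it. \Cref{deg} only gives constancy of $\dd_K$ along components of $H$ (and, by transposing the factorization, constancy of $\dd_H$ along components of $K$). Your patch via lifting an edge $u \edge v$ of $G$ to an $(H-K)$-path $u \blueedge w \rededge v$ yields $\dd_K(u)=\dd_K(w)$ and $\dd_H(w)=\dd_H(v)$ --- relations with the \emph{midpoint} $w$ --- and nothing forces $\dd_K(u)=\dd_K(v)$ or $\dd_H(u)=\dd_H(v)$. A concrete counterexample is supplied by the paper itself: $P_8\cup P_8$ factors into a perfect matching $H$ and $K\cong P_8\cup P_8$, and on a single $G$-component we have $\dd_H\equiv 1$ while $\dd_K=\dd_G$ takes both values $1$ and $2$. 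Consequently the step asserting that $p=1$ or $q=1$ holds uniformly on $C$ is unjustified: \Cref{degree=1} only tells you that \emph{each vertex individually} has one of its two degrees equal to $1$, and which one may vary across the component.

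The paper avoids this entirely by a local argument inside a component of $H\oplus K$. If the component is $2$-regular, every vertex has $\dd_H=\dd_K=1$ and any $H$-edge is a matched pair. Otherwise pick $u$ with $\dd_{H\oplus K}(u)\ge 3$; by \Cref{degree=1} one may assume $\dd_H(u)=1$ and $\dd_K(u)\ge 2$. Its unique $H$-neighbour $v=N_H(u)$ satisfies $\dd_K(v)=\dd_K(u)\ge 2$ by \Cref{deg}, hence $\dd_H(v)=1$ by \Cref{degree=1} again, so $(u,v)$ is a matched pair. This two-step use of \Cref{deg} and \Cref{degree=1} on a single $H$-edge is the idea your proof is missing; if you replace the false constancy claim by it, the remaining bookkeeping against \Cref{matched_pair} and \Cref{self-match} goes through.
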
  
  
\begin{proof}
Let $C$ be a component of $H \oplus K$.
If $C$ is 2-regular, then for each vertex $u\in V(C)$ we have $\deg_H(u)=\deg_K(u)=1$, so each $H$-edge in $C$ is a \emph{matched pair}.
If $C$ is not 2-regular, then there exists a vertex $u \in C$ with $\deg_{H\oplus K}(u)\ge 3$. By \Cref{degree=1}, we may assume w.l.o.g. that $\deg_H(u)=1$ and $\deg_K(u)\ge 2$. 
Then $u$ has a unique neighbor $N_H(u)$ in $H$, and by \Cref{deg} we get $\deg_K(N_H(u)) = \deg_K(u) \ge 2$. 
Applying \Cref{degree=1} again, we find $\deg_H(N_H(u)) = 1$, so $\{u,\,N_H(u)\}$ is a matched pair. Since $C$ was an arbitrary component of $H \oplus K$, \Cref{self-match} ensures that $G$ is prime.  
\end{proof}

\begin{lemma}\label{fix_uni_}
Let $G$ be a graph such that for a pair of vertices $u$ and $v$ there is a unique shortest path $\gamma$ between them.
Then there is no perfect matching $H$ with an adjacency matrix $B$ such that $B$ is an automorphism of $G$ without fixed edge and $Bu=v$.   
\end{lemma}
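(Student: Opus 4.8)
The plan is to argue by contradiction: suppose such a perfect matching $H$ exists, with adjacency matrix $B$, such that $B \in \mathrm{Aut}(G)$, $B$ fixes no edge of $G$, and $Bu = v$. The first observation is that, since $H$ is a perfect matching, its adjacency matrix is a fixed-point-free involution (every vertex is matched to exactly one other vertex), so in particular $B^2 = I$ by \Cref{B2=1}, $Bv = u$, and $Bw \neq w$ for every $w \in V(G)$. The second observation is that an automorphism of $G$ maps geodesics to geodesics and preserves distances, so $B(\gamma)$ is a shortest path from $Bu = v$ to $Bv = u$; since $\gamma$ is the \emph{unique} shortest $u$–$v$ path (equivalently, the unique shortest $v$–$u$ path), $B$ must fix the vertex set and edge set of $\gamma$, and the only automorphism of a path that interchanges its two endpoints is the reversal. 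Hence, writing $\gamma = (u = w_0, w_1, \dots, w_\ell = v)$ with $\ell = d_G(u,v)$, we get $B(w_i) = w_{\ell - i}$ for all $i$.

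From here the argument is a short parity split. If $\ell$ is even, then $B(w_{\ell/2}) = w_{\ell/2}$, giving a fixed vertex of $B$, which contradicts that $H$ is a perfect matching. If $\ell$ is odd, then $B$ swaps the two middle vertices $w_{(\ell-1)/2}$ and $w_{(\ell+1)/2}$, which are consecutive on $\gamma$ and hence adjacent in $G$; thus the edge $w_{(\ell-1)/2} \edge w_{(\ell+1)/2}$ of $G$ is a fixed edge of $B$, contradicting the hypothesis that $B$ has no fixed edge. (The degenerate case $u = v$, i.e. $\ell = 0$, is immediate: then $Bu = u$ is a fixed vertex.) Either way we reach a contradiction, completing the proof.

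I do not expect a genuine obstacle here. The one step that deserves care is the claim that $B$ acts on $\gamma$ by reversal — this uses both that automorphisms preserve distance and, crucially, the \emph{uniqueness} of the geodesic $\gamma$; without uniqueness $B$ could send $\gamma$ to a different shortest path and the argument would collapse. Once that is pinned down, the remainder is pure bookkeeping on the parity of $d_G(u,v)$, exploiting the two structural facts that a perfect matching has no fixed vertex and that here $B$ additionally has no fixed edge.
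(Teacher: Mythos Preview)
Your proposal is correct and follows essentially the same argument as the paper's proof: both proceed by contradiction, use the uniqueness of $\gamma$ to conclude that $B$ maps $\gamma$ onto itself (hence acts as the reversal), and then split on the parity of the path to obtain either a fixed vertex (impossible for a perfect matching) or a fixed edge (excluded by hypothesis). Your write-up is slightly more explicit about why $B$ must act as the reversal on $\gamma$, but the approach is identical.
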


\begin{proof}
Suppose for contradiction that there exists a perfect matching $H$ such that $B u = v$. If $\gamma$ has an odd number of vertices, then $B$ fixes the central vertex of $\gamma$. This is because there is only one path between $u$ and $v$, and so $B \gamma = \gamma$. However, $B$ cannot fix a vertex of $G$, since $H$ is a perfect matching. Therefore, $\gamma$ has an even number of vertices and so $B$ fixes the central edge of $\gamma$, contradicting the assumption that $B$ fixes no edge of $G$.
\end{proof}

\noindent Now, we have the following immediately corollary.

\begin{corollary}\label{autT}
For every tree $T$ and every perfect matching $H$ with adjacency matrix $B$,
if $B$ is an automorphism of $T$, then $B$ fixes an edge of $T$.
\end{corollary}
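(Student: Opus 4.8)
The plan is to derive this as a direct consequence of \Cref{fix_uni_}. The key observation is that in a tree, between \emph{every} pair of distinct vertices there is a unique path, and since a tree has no cycles, this unique path is automatically the unique shortest path. So the hypothesis of \Cref{fix_uni_} is satisfied vacuously for any choice of endpoints $u,v$.

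First I would argue by contradiction: suppose $B$ is an automorphism of $T$ that fixes no edge. Since $H$ is a perfect matching on $V(T)$, the permutation $B$ is a product of disjoint transpositions, so $B$ has no fixed vertex; in particular $Bu \neq u$ for every vertex $u$. Pick any vertex $u$ and set $v = Bu$. Then $u \neq v$, and by the tree property there is a unique shortest path $\gamma$ between $u$ and $v$. Now \Cref{fix_uni_} applies: it tells us that there is \emph{no} perfect matching $H$ with adjacency matrix $B$ such that $B$ is an automorphism of $T$ without fixed edge and $Bu = v$. This contradicts the existence of our $B$. Hence $B$ must fix some edge of $T$.

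There is essentially no obstacle here — the work has all been done in \Cref{fix_uni_}. The only point requiring a sentence of care is the reduction ``unique path $\Rightarrow$ unique shortest path'' in a tree, and the remark that a perfect-matching permutation has no fixed point (so that we can always produce a pair $u \neq v = Bu$ to feed into the lemma). Both are immediate.
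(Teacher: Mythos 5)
Your proof is correct and is exactly the argument the paper intends: it states the corollary as an immediate consequence of \Cref{fix_uni_}, relying on the same two observations you make explicit (uniqueness of paths in a tree and fixed-point-freeness of a perfect-matching permutation). No issues.
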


\noindent In \cite[Problem 2]{spectral}, the authors ask for a characterization of factorable forests. We answer this question as follows.

\mainn*

\begin{proof}
Let $G$ be a forest factored into $H$ and $K$. Since $G$ has no $C_4$, \Cref{no-c4} implies $G$ is prime; therefore $H$ is a perfect matching. Let $B$ be the adjacency matrices of $H$. By \cref{autT}, $B$ permutes the components of $G$ (it does not fix any component).

Next, let $ X $ be the adjacency matrix of a component. Since $G$ is bipartite, $X$ has the following form:
\[
\begin{pmatrix}
0 & Y \\
Y^T & 0
\end{pmatrix}.
\]
Now, consider two isomorphic components of $ G $, whose adjacency matrix is of the form:
\[
\begin{pmatrix}
0 & Y & 0 & 0 \\
Y^T & 0 & 0 & 0 \\
0 & 0 & 0 & Y \\
0 & 0 & Y^T & 0
\end{pmatrix}.
\]
If this matrix is multiplied by the adjacency matrix of a perfect matching:
\[
\begin{pmatrix}
0 & 0 & I & 0 \\
0 & 0 & 0 & I \\
I & 0 & 0 & 0 \\
0 & I & 0 & 0
\end{pmatrix},
\]
the result is,
\[
\begin{pmatrix}
0 & 0 & 0 & Y \\
0 & 0 & Y^T & 0 \\
0 & Y & 0 & 0 \\
Y^T & 0 & 0 & 0
\end{pmatrix},
\] the proof is complete.
\end{proof}

\begin{corollary}{\rm\cite[Theorem 20]{spectral}}\label{tree}
Every tree with at least $2$ vertices is not factorable. 
\end{corollary}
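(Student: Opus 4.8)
The plan is to derive this directly from the structural results already established, in particular \Cref{no-c4}, \Cref{B_aut_G}, and \Cref{autT}. Suppose, for contradiction, that $T$ is a tree with at least two vertices and that $T$ is factorable, say $T$ factors into $H$ and $K$. A tree is acyclic, so in particular it contains no $C_4$; hence \Cref{no-c4} applies and $T$ is prime. Since $T$ is connected, \Cref{self-match} (or the definition of prime) forces one of the two factors to be a perfect matching; without loss of generality $H$ is a perfect matching, with adjacency matrix $B$. By \Cref{B_aut_G}, $B$ is then an automorphism of $T$ with no fixed edge.

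This contradicts \Cref{autT}, which asserts that every perfect-matching automorphism of a tree must fix an edge of the tree. Hence no tree on at least two vertices can be factorable. (Alternatively, one may invoke \Cref{mainn2}: a tree with at least two vertices is a forest with no isolated vertices and with a single component, yet part (i) of \Cref{mainn2} would require a second component isomorphic to it — impossible.) I expect no real obstacle here; the only things to verify are that the hypotheses of the cited results are genuinely met, namely that $T$ having at least two vertices rules out the empty graph (so a perfect matching $H$ on $V(T)$ is nontrivial and $B \ne I$), and that acyclicity makes \Cref{no-c4} applicable with no further case analysis.
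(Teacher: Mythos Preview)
Your proposal is correct and follows essentially the same route as the paper: the corollary is placed immediately after \Cref{mainn2} with no separate proof, being an instant consequence of part~(i) (a tree has a single component), and the direct argument via \Cref{no-c4}, \Cref{B_aut_G}, and \Cref{autT} is exactly how \Cref{mainn2} itself is established. Either of your two arguments is fine; there is nothing further to check.
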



\subsection{Degree Sequences}

In this subsection, we study the interaction between the degree sequence of a graph and its primeness.
We start with the following lemma.
\begin{lemma}\label{abcd}
  Let $G$ be a factorable graph. If two vertices $a$ and $d$ are adjacent in $G$, then there exist adjacent vertices $b$ and $c$, such that $a$, $b$, $c$, and $d$ are pairwise distinct and
  $$
  \dd_G(a)\, \dd_G(d) = \dd_G(b)\, \dd_G(c).
  $$
\end{lemma}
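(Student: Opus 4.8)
The plan is to use the structural characterization of factorizations. Suppose $G$ factors into $H$ and $K$, so $A = BC$ and (by \Cref{dia_con}) $H \oplus K$ is simple and satisfies the diamond condition. Fix the edge $a \edge d$ of $G$. By the combinatorial description of the matrix product, the entry $A_{ad} = 1$ means there is a unique $(H-K)$-path of length $2$ from $a$ to $d$; write it $a \blueedge w \rededge d$, so $a \edge w \in E(H)$ and $w \edge d \in E(K)$. By the diamond condition there is also a unique $(K-H)$-path $a \rededge w' \blueedge d$ from $a$ to $d$, so $a \edge w' \in E(K)$ and $w' \edge d \in E(H)$. I will take $\{b,c\}$ to be (essentially) $\{w, w'\}$, after checking that $w \edge w'$ is an edge of $G$ and that all four vertices $a, b, c, d$ are distinct.

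First I would verify the degree identity. We have $a \edge w$ in $H$ and $a \edge w'$ in $K$; pushing these along the paths, $w \edge d \in E(K)$ and $w' \edge d \in E(H)$. Consider the $(H-K)$-path $w \blueedge a \rededge w'$: this witnesses an arc $w \to w'$ in $HK$, hence (since $A$ is the adjacency matrix of the simple graph $G$) $w \edge w' \in E(G)$, provided $w \neq w'$; and symmetrically the $(H-K)$-path $w' \edge ?$ — more carefully, I should instead use the path $w \blueedge ? $. The cleanest route: the pair of alternating paths through $a$ and through $d$ together show $w \edge w' \in E(G)$. Now set $b = w$, $c = w'$. Using \Cref{thm7}, $\dd_G(x) = \dd_H(x)\dd_K(x)$ for every vertex $x$, and using \Cref{deg} (vertices connected in $H$ have equal $K$-degree, and vice versa): since $a \edge b$ in $H$, $\dd_K(a) = \dd_K(b)$; since $c \edge d$ in $H$, $\dd_K(c) = \dd_K(d)$; since $a \edge c$ in $K$, $\dd_H(a) = \dd_H(c)$; since $b \edge d$ in $K$, $\dd_H(b) = \dd_H(d)$. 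Therefore
$$
\dd_G(a)\,\dd_G(d) = \dd_H(a)\dd_K(a)\,\dd_H(d)\dd_K(d) = \dd_H(c)\dd_K(b)\,\dd_H(b)\dd_K(c) = \dd_G(b)\,\dd_G(d')\cdots
$$
— concretely, regrouping gives $\dd_H(b)\dd_K(b)\cdot \dd_H(c)\dd_K(c) = \dd_G(b)\dd_G(c)$, as required.

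The main obstacle is the distinctness of $a, b, c, d$. Several degenerate cases must be excluded: $b = c$ (i.e. $w = w'$), $b = a$ or $c = a$, $b = d$ or $c = d$, and $a = d$ (excluded since $G$ is simple with no loops, though the $(H-K)$-path could a priori have $a = d$ — here $a \edge d \in E(G)$ forces $a \neq d$). If $b = a$ then $a \edge a \in E(H)$, impossible as $H$ is simple; similarly $c = d$ is impossible. If $b = d$, then $a \edge d \in E(H)$; but the $(K-H)$-path gives $a \edge d$-related structure in $K$ as well — more directly, $a \edge d \in E(H)$ together with $a \edge d \in E(G)$ contradicts part (i) of \Cref{dia_con} (that $H$ and $K$, hence $G$ and $H$, share no edge — wait, $G$ and $H$ sharing no edge is what \Cref{B_aut_G}'s proof used, and follows since $BC$ having zero diagonal forces it). The genuinely delicate case is $b = c$: here the unique $(H-K)$-path and the unique $(K-H)$-path pass through the same middle vertex $w$, so $a \edge w \in E(H) \cap E(K)$, contradicting simplicity of $H \oplus K$ — so this case is also impossible. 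I would organize these exclusions as a short sequence of one-line contradictions, each invoking either simplicity of $H \oplus K$ (\Cref{dia_con}(i)) or the zero-diagonal/edge-disjointness of $G$ from $H$; once distinctness is secured, the degree computation above finishes the proof.
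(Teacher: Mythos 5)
Your proposal is correct and follows essentially the same route as the paper: take the diamond through the edge $a\edge d$ guaranteed by \cref{dia_con}, let $b$ and $c$ be the two middle vertices, and combine \cref{thm7} with \cref{deg} (applied along the two $H$-edges and two $K$-edges of the diamond) to regroup $\dd_H(\cdot)\dd_K(\cdot)$ factors. Your extra checks --- that $b\edge c\in E(G)$ via the $(H\text{--}K)$-path $b\blueedge a\rededge c$, and that the four vertices are pairwise distinct by simplicity and edge-disjointness of $H$ and $K$ --- are points the paper's proof leaves implicit, and despite some garbled phrasing in your degree display the argument is sound.
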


\begin{proof}
Let $G$ be factored into $H$ and $K$. Since $a$ and $d$ are adjacent in $G$, there exist vertices $b$ and $c$ satisfying the diamond condition (see \cref{dimond}(a)). By \cref{thm7}, we have:
\begin{align*}
\dd_G(a)\, \dd_G(d) &= \dd_K(a)\, \dd_H(a)\, \dd_K(d)\, \dd_H(d)\\
&= \dd_K(c)\, \dd_H(b)\, \dd_K(b)\, \dd_H(c)\\
&= \dd_G(c)\, \dd_G(b).\qedhere
\end{align*}
\end{proof}

\noindent As a direct consequence of the preceding result, we have the following corollary.
\begin{corollary}
Let $G$ be a connected graph with two adjacent vertices of degrees $p$ and $q$, where $p$ and $q$ are prime numbers. If $G$ has \emph{neither}  
\begin{enumerate}[label=\rm{(\roman*)}]
  \item a vertex of degree $pq$, nor
  \item any vertex of degree 1,  
\end{enumerate}
then $G$ is prime.

\end{corollary}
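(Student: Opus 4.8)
The plan is to combine \Cref{self-match} with the degree-multiplicativity of \Cref{thm7} that drives \Cref{abcd}. If $G$ has no factorization then it is prime by definition, so assume $G$ is factored into $H$ and $K$ with adjacency matrices satisfying $A=BC$. Since $G$ is connected, \Cref{self-match} reduces the task to producing a single matched pair in this (arbitrary) factorization.

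Fix the given edge $a\edge d$ with $\dd_G(a)=p$ and $\dd_G(d)=q$. As $G$ has no isolated vertex, \Cref{thm7} gives $\dd_H(a)\dd_K(a)=p$ with both factors at least $1$, so $\{\dd_H(a),\dd_K(a)\}=\{1,p\}$; since $A=A^{\mathsf T}=CB$ the ordered pair $(K,H)$ is also a factorization of $G$, and the matched-pair condition is symmetric in the two factors, so after possibly renaming we may assume $\dd_H(a)=1$. Write $a':=N_H(a)$ for the unique $H$-neighbour of $a$; note $a'\neq a$ since $H$ is loopless. The key point is that $A=BC$ is a $(0,1)$-matrix, so the entry $A_{ad}=1$ counts exactly one $(H\text{--}K)$-path of length $2$ from $a$ to $d$; because $\dd_H(a)=1$, that path must begin with the edge $a\blueedge a'$, forcing $a'\rededge d\in E(K)$ (and $a'\neq d$, lest $K$ contain a loop). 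This is exactly the configuration $a\blueedge a'\rededge d$ that \Cref{abcd} builds around the edge $a\edge d$, with $a'$ as the middle vertex of the $(H\text{--}K)$-path; in the notation of that lemma, $\dd_G(a')\,\dd_G(b)=pq$ for a suitable neighbour $b$ of $a'$.

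Now I would run the degree bookkeeping along $a\blueedge a'\rededge d$. Since $a$ and $a'$ are adjacent in $H$, \Cref{deg} gives $\dd_K(a')=\dd_K(a)=p$; since $a'$ and $d$ are adjacent in $K$, applying \Cref{deg} to the factorization $(K,H)$ gives $\dd_H(a')=\dd_H(d)$. But $\dd_H(d)\dd_K(d)=\dd_G(d)=q$ is prime, so $\dd_H(d)\in\{1,q\}$. If $\dd_H(d)=q$ then $\dd_H(a')=q$, whence $\dd_G(a')=\dd_H(a')\dd_K(a')=pq$, contradicting hypothesis~(i) (and then $\dd_G(b)=1$, contradicting hypothesis~(ii) as well). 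Therefore $\dd_H(d)=1$, so $\dd_H(a')=1$, and hence $(a,a')$ is a matched pair. As the factorization was arbitrary, \Cref{self-match} yields that $G$ is prime.

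The step I expect to require the most care is identifying $a'=N_H(a)$ as the unique middle vertex of the $(H\text{--}K)$-path from $a$ to $d$: this uses $\dd_H(a)=1$ together with the fact that $A=BC$ is a $(0,1)$-matrix, and it is what lets the degree information from $a$ and from $d$ collide at $a'$, forcing $\dd_G(a')=pq$ in the bad case. The remainder is only the identity $\dd_G=\dd_H\dd_K$ and a double use of \Cref{deg}, so I do not anticipate further difficulty.
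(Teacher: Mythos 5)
Your proof is correct. It rests on the same ingredients as the paper's own argument --- the degree identity $\dd_G=\dd_H\dd_K$ of \Cref{thm7}, the constancy of the opposite-factor degree along components from \Cref{deg}, and the reduction to exhibiting a single matched pair via \Cref{self-match} --- but the bookkeeping is organized differently, and the difference is not merely cosmetic. The paper first invokes \Cref{abcd} to produce the opposite corners $b,c$ of the diamond and uses \emph{both} hypotheses (no vertex of degree $pq$ and no vertex of degree $1$) to force $\{\dd_G(b),\dd_G(c)\}=\{p,q\}$, then chases degrees to land on the matched pair $(c,d)$ inside $K$. You instead normalize $\dd_H(a)=1$ at the outset (legitimate, since $A=A^{\mathsf T}=CB$ and the matched-pair notion is symmetric in the factors), identify the middle vertex $a'=N_H(a)$ of the unique $(H-K)$-path realizing the edge $a\edge d$, and push degrees along $a$--$a'$--$d$; the only obstruction is $\dd_G(a')=pq$, which hypothesis (i) alone eliminates. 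This has two consequences worth noting: your argument never uses hypothesis (ii), so it actually proves a slightly stronger statement; and it goes through verbatim when $p=q$, whereas the paper's step asserting that $p=\dd_H(a)\dd_K(a)=q\,\dd_H(a)$ ``yields a contradiction'' is not a contradiction in that case (it merely gives $\dd_H(a)=1$) and requires a small patch. The matched pair you produce, $(a,a')$ in $H$, is just the mirror image of the paper's $(c,d)$ in $K$ under your initial relabelling of the factors.
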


\begin{proof}
Let $\dd_G(a) = p$ and $\dd_G(d) = q$ and $G$ be factored into $K$ and $H$. By \Cref{abcd} and without loss of generality, we can assume that there are two other vertices $b$ and $c$ such that $\dd_G(b) = p$ and $\dd_G(c) = q$. 
Note $a, b, c$, and $d$ should satisfy the diamond condition (see \cref{dimond}(a)).
Given that $\dd_G(c) = q$, so either $\dd_H(c) = q$ or $\dd_K(c) = q$, see \cref{thm7}. 
If $\dd_K(c) = q$, then $\dd_K(a) = q$, as $c$ is adjacent to $a$ in $H$, see \cref{deg}.
However, this yields a contradiction, as $p=\dd_G(a) =\dd_H(a)\dd_K(a)=q \dd_H(a)$. 
Thus, we have $\dd_H(c) = q$ and so $\dd_K(c)=1$.
Note $d$ and $c$ are adjacent in $K$ and so $\dd_H(c)=\dd_H(d)=q$.
Since $\dd_G(d)=\dd_H(d)\dd_K(d)$, we have $\dd_K(d)=1$.
Since $c$ and $d$ are adjacent in $K$ and $\dd_K(d)=\dd_K(c)=1$, they form a matched pair.
\end{proof}

\begin{lemma}\label{pk}
Let $G$ be a connected graph with a vertex of degree $p$ and no vertices of degree $kp$, where $p$ is a prime number and $k > 1$. 
Then $G$ is prime.
\end{lemma}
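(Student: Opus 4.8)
\textbf{Proof proposal for \Cref{pk}.}

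The plan is to argue by contradiction: suppose $G$ is connected, has a vertex $w$ with $\dd_G(w)=p$, has no vertex of degree $kp$ for any integer $k>1$, and yet $G$ is \emph{not} prime. Then $G$ factors into graphs $H$ and $K$ with neither factor a perfect matching. By \Cref{thm7}, $\dd_G(w)=\dd_H(w)\dd_K(w)=p$, so since $p$ is prime, one of $\dd_H(w),\dd_K(w)$ equals $p$ and the other equals $1$; without loss of generality say $\dd_H(w)=p$ and $\dd_K(w)=1$. The heart of the argument is to propagate this ``degree-$1$ in $K$'' property through the graph. First I would use \Cref{deg}: since $w$ has a unique $H$-neighbour $w'=N_H(w)$ and $w$ is adjacent to $w'$ in $H$, we get $\dd_K(w')=\dd_K(w)=1$. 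Then $w'$ also satisfies $\dd_G(w')=\dd_H(w')\cdot 1=\dd_H(w')$; I want to show $\dd_H(w')=p$ as well, and more generally that the whole $H$-component of $w$ consists of vertices of $K$-degree $1$.

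The key step is to show that \emph{every} vertex of $G$ has $K$-degree $1$ (equivalently $H$-degree $1$), by spreading the property from the component $C_H(w)$ of $w$ in $H$. Since $G$ is connected, it suffices to show that whenever a vertex $x$ has $\dd_K(x)=1$ and $y$ is a $G$-neighbour of $x$, then $\dd_K(y)=1$ too — this would let me walk along any path in $G$. So suppose $\dd_K(x)=1$, let $x'=N_H(x)$, and let $y$ be adjacent to $x$ in $G$. Then $x\edge y\in E(G)=E(HK)$, so there is an $(H\!-\!K)$-path of length $2$ from $x$ to $y$: either $x\rededge ?$ is impossible since the first edge must be in $H$, so it reads $x\blueedge x'\rededge y$ (the unique $H$-edge at $x$ goes to $x'$), giving $x'\rededge y\in E(K)$. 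Applying \Cref{deg} to the $H$-edge $x\blueedge x'$ gives $\dd_K(x')=\dd_K(x)=1$, but $x'$ has $K$-neighbour $y$, so $y=N_K(x')$ is the \emph{unique} $K$-neighbour of $x'$ — fine, but this bounds $\dd_K(x')$, not $\dd_K(y)$. To get $\dd_K(y)=1$ I would instead argue: $\dd_G(y)=\dd_H(y)\dd_K(y)$; I need to rule out $\dd_K(y)\ge 2$. If $\dd_K(y)\ge 2$ then, since $x'$ is adjacent to $y$ in $K$, \Cref{deg} gives $\dd_K(x')=\dd_K(y)\ge 2$, contradicting $\dd_K(x')=1$. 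Hence $\dd_K(y)=1$, and by connectedness every vertex of $G$ has $K$-degree $1$, i.e.\ $K$ is a perfect matching — contradicting that neither factor is a perfect matching.

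Wait — I have not yet used the hypothesis ``no vertex of degree $kp$'', so let me re-examine: the argument above actually seems to show that any connected $G$ with a vertex of prime degree $p$ is prime, which is false (e.g.\ $C_4$ has all degrees $2$ and is not prime — but $2$ is prime!). The gap is in the case analysis at $w$: I assumed the unique factor structure forces $\dd_H(w)=p,\dd_K(w)=1$, but equally $\dd_H(w)=1,\dd_K(w)=p$, and when I then try to propagate I must track \emph{which} side carries the high degree, and a vertex adjacent to $w$ in $G$ need not inherit the same split. This is where the hypothesis enters: the correct approach is to consider the vertex $w$ with $\dd_G(w)=p$ and its neighbour $u$ in $G$ realizing the degree-$2$ path, apply \Cref{abcd} to the edge, and track degrees of the four vertices $w,b,c,u$ of the diamond; the products $\dd_H\dd_K$ must multiply correctly, and because $p$ is prime, at the vertex where both $H$- and $K$-degree would need to exceed $1$ we would force some vertex to have degree divisible by $p$ and strictly larger, i.e.\ degree $kp$ with $k>1$ — contradicting the hypothesis — unless some incident vertex has $H$- or $K$-degree $1$, which by \Cref{deg} and connectivity of that $H$- or $K$-component yields a matched pair in every component, whence \Cref{self-match} gives primeness. \textbf{The main obstacle} is exactly this bookkeeping: pinning down, in the diamond at a vertex whose $G$-degree is the prime $p$, that the only escape from ``some neighbour has a pendant in $H$ or $K$'' forces a degree-$kp$ vertex, $k>1$; everything else (propagation via \Cref{deg}, the final appeal to \Cref{self-match}) is routine.
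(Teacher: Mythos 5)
Your first, detailed argument contains a genuine error: you apply \Cref{deg} to a $K$-edge to conclude that its endpoints have equal \emph{$K$-degrees} (``since $x'$ is adjacent to $y$ in $K$, \Cref{deg} gives $\dd_K(x')=\dd_K(y)$''). \Cref{deg} gives the opposite pairing: connectivity in $H$ forces equal $K$-degrees, and connectivity in $K$ forces equal $H$-degrees. With the correct reading, your propagation step ``$\dd_K(x)=1$ and $y$ a $G$-neighbour of $x$ imply $\dd_K(y)=1$'' does not go through --- and it cannot, since it never invokes the hypothesis about degrees $kp$. (Your sanity check is also off: $C_4\cong K_{2,2}$ \emph{is} prime --- it is the paper's opening example --- so it is not a counterexample to anything; the problem is only that the propagation is unjustified.) Your closing paragraph correctly identifies the repair --- track degrees around the diamond at the prime-degree vertex and use the no-$kp$ hypothesis --- but you explicitly leave that ``bookkeeping'' as an unresolved obstacle, so the proposal as written is not a proof.

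The missing step is short, and it is exactly what the paper does. Let $G=HK$ and let $a$ have $\dd_G(a)=p$. By \Cref{thm7} and primality of $p$, without loss of generality $\dd_H(a)=1$ and $\dd_K(a)=p$. Let $c=N_H(a)$ be the unique $H$-neighbour of $a$. Applying \Cref{deg} to the $H$-edge $a\edge c$ gives $\dd_K(c)=\dd_K(a)=p$, hence $\dd_G(c)=\dd_H(c)\cdot p$. Since $G$ has no vertex of degree $kp$ with $k>1$, this forces $\dd_H(c)=1$. Thus $a$ and $c$ are adjacent in $H$ and both have $H$-degree $1$, i.e.\ $\{a,c\}$ is a matched pair; since $G$ is connected, \Cref{self-match} yields that $G$ is prime. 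Note that this produces a matched pair in one step and never needs the global claim that every vertex has $K$-degree $1$, which is where your attempt stalled.
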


\begin{proof}
Let $G$ be factored into $H$ and $K$, and let $a$ be a vertex of degree $p$ in $G$. 
There are vertices  $b,c$ and $d$ satisfying the diamond condition (see \cref{dimond}(a)).
Since $\dd_G(a) = p$, without loss of generality, we can assume that $\dd_H(a) = 1$ and $\dd_K(a) = p$. 
Note $a$ is adjacent to $c$ in $K$, so $\dd_K(a) = \dd_K(c) = p$. Since $\dd_G(c) = \dd_H(c) \dd_K(c)$, we have $\dd_G(c) = p \, \dd_H(c)$, which implies that $\dd_H(c) = 1$.
Given that $\dd_H(c) = \dd_H(a) = 1$ and $c$ is adjacent to $a$ in $H$, we deduce that ${c,a}$ is a matched pair. Finally, since $G$ is connected \Cref{self-match} completes the proof.
\end{proof}

\begin{corollary}\label{p_prime}
  Let $G$ be a connected graph whose maximum degree is prime. Then $G$ is prime.
\end{corollary}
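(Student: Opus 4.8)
The plan is to deduce \Cref{p_prime} directly from \Cref{pk} by a short case distinction on the value of the maximum degree. Let $G$ be a connected graph and let $p = \Delta(G)$ be its maximum degree, which we assume is a prime number. The goal is to show $G$ is prime, and the natural strategy is to check whether the hypotheses of \Cref{pk} are satisfied with this $p$.

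First I would observe that $G$ certainly has a vertex of degree $p$, namely any vertex attaining the maximum degree. Next I would check the second hypothesis of \Cref{pk}: that $G$ has no vertex of degree $kp$ for any integer $k > 1$. This is immediate, since $\Delta(G) = p$ means every vertex has degree at most $p$, and $kp > p$ whenever $k > 1$. Hence both hypotheses of \Cref{pk} hold, and we conclude that $G$ is prime.

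There is essentially no obstacle here; the corollary is a one-line specialization of \Cref{pk}, obtained by taking the prime vertex to be a maximum-degree vertex and noting that no larger degrees exist. The only point worth a word of care is the degenerate case $p \le 1$: if $\Delta(G) \le 1$ then $G$, being connected, is a single edge or a single vertex, but $1$ is not prime, so the hypothesis forces $p \ge 2$ and this case does not arise. (Likewise, if $G$ has no edges it has no factorization and is vacuously prime, but again connectedness plus $\Delta(G)$ prime rules this out.)

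\begin{proof}
Let $p = \Delta(G)$ be the maximum degree of $G$, which is prime by hypothesis, so in particular $p \ge 2$. Then $G$ has a vertex of degree $p$, and since every vertex of $G$ has degree at most $p$, the graph $G$ has no vertex of degree $kp$ for any integer $k > 1$. By \Cref{pk}, $G$ is prime.
\end{proof}
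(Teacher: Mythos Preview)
Your proof is correct and matches the paper's intended approach: the corollary is stated immediately after \Cref{pk} with no separate proof, so the paper treats it as a direct specialization, exactly as you do by taking $p=\Delta(G)$ and noting that no degree $kp$ with $k>1$ can occur.
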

\noindent Next, we show that the Petersen graph is not factorable.
\begin{corollary}
The Petersen graph is not factorable.
\end{corollary}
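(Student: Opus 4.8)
The plan is to combine \Cref{p_prime} with \Cref{B_aut_G}. The Petersen graph $P$ is connected and $3$-regular, so its maximum degree equals the prime $3$; by \Cref{p_prime}, $P$ is prime. Hence, if $P$ were factorable, then in the corresponding factorization one factor would be a perfect matching $H$. Let $B$ be the adjacency matrix of $H$. By \Cref{B2=1} we have $B^2 = I$, and by \Cref{B_aut_G}, $B$ is an automorphism of $P$ with no fixed edge. Since $H$ is a perfect matching on the $10$ vertices of $P$, as a permutation $B$ is a product of $5$ disjoint transpositions, i.e.\ a \emph{fixed-point-free involution} of $V(P)$. So it suffices to prove that $\mathrm{Aut}(P)$ contains no fixed-point-free involution; this contradicts the existence of such a $B$ and finishes the proof.

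To establish this, I would use the identification of $P$ with the Kneser graph $K(5,2)$: its vertex set is $\binom{[5]}{2}$, two $2$-subsets being adjacent exactly when they are disjoint, and $\mathrm{Aut}(P) \cong S_5$ acting on $\binom{[5]}{2}$ in the natural induced way (this action is faithful and has order $120$). Every involution of $S_5$ is either a transposition $(i\,j)$ or a product $(i\,j)(k\,\ell)$ of two disjoint transpositions. If it is $(i\,j)$, then the $2$-subset $\{i,j\}$ is fixed; if it is $(i\,j)(k\,\ell)$, then both $\{i,j\}$ and $\{k,\ell\}$ are fixed. In either case the involution fixes at least one vertex of $P$, so $\mathrm{Aut}(P)$ has no fixed-point-free involution, giving the required contradiction.

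The only nonroutine ingredient is the structure of $\mathrm{Aut}(P)$, and once $P$ is viewed as $K(5,2)$ the rest is immediate, so I do not expect a real obstacle here. (If one wishes to sidestep quoting $\mathrm{Aut}(P) \cong S_5$, one can instead check directly on the usual pentagon-plus-pentagram drawing of $P$ that every involutory automorphism fixes a vertex, but the Kneser description is cleaner; note that $P$ does have fixed-point-free automorphisms of order $5$, so it is essential that $B$ be an \emph{involution}.)
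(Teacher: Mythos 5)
Your proof is correct and follows essentially the same route as the paper: primeness via \Cref{p_prime} (since $P$ is cubic), then \Cref{B_aut_G} reduces the question to whether $\mathrm{Aut}(P)\cong S_5$ contains a fixed-point-free involution, which is ruled out by inspecting the involutions of $S_5$. The paper performs this last check via the edge-labelling of $P$ by elements of $[5]$ rather than the Kneser-graph vertex action, but the case analysis is the same in substance; your version is if anything slightly cleaner, as it handles all involutions at once rather than only those moving a prescribed edge label.
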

\begin{proof}
Let us denote the Petersen graph by $P$. 
It is cubic, and by \cref{p_prime}, It is prime.
\begin{figure}[H]
    \centering
    \includegraphics[scale=0.8]{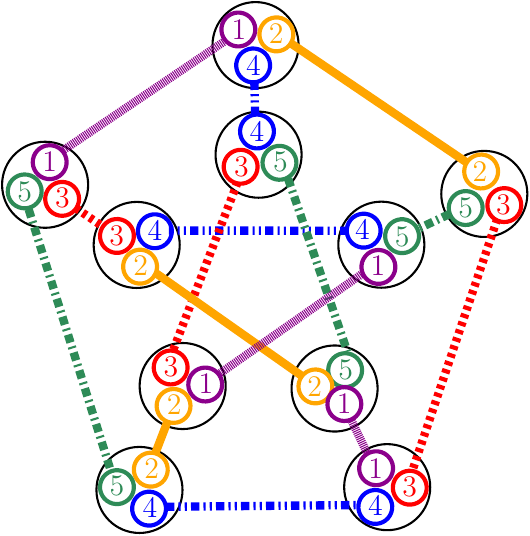}
    \label{Petersen graph}
\end{figure}

\noindent

Next, we show that every automorphism of $P$ of order two without a fixed edge has a fixed vertex. This implies that $P$ has no factorization; see \Cref{B_aut_G}. It is known that the automorphism group of $P$ is the symmetric group $S_5$, see \cite{MR3552765}.
Let $e$ be an arbitrary edge of $P$. Without loss of generality, we can assume that $e$ is colored number $1$, as $P$ is edge-transitive. 
Assume that $\sigma$ is an automorphism of $P$ of order 2 that maps the number 1 to 2, so there are only four cases for $\sigma$:
$$(1,2),\,(1,2)(3,4),\,(1,2)(4,5), \text{ and }(1,2)(3,5)$$
In all cases, $\sigma$ must fix the unique vertex incident with the edges colored $3$ (red), $4$ (blue), and $5$ (green); hence $\sigma$ has a fixed vertex.\qedhere
 
\end{proof}

\subsection{Factorization of Grids and Torus}

\noindent Maghsoudi et al.\ \cite{maghsoudi2023matrix} asked to characterize all factorable grid and torus graphs. We answer these questions here.

\begin{definition}{\rm(cf. \cite[p. 382]{domi})}\label{grid}
A \defin{two-dimensional grid} graph, also known as a two-dimensional lattice graph is an $m\times n$ lattice graph that is the Cartesian product $P_m  \square  P_n$ of path graphs on $m$ and $n$ vertices. 
\end{definition}

\begin{lemma}\label{grid_prime}
Every grid graph is prime.
\end{lemma}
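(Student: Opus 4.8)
The plan is to reduce the claim to a statement about automorphisms, using the machinery already developed, and then to exploit the rigidity of shortest paths in a grid. First I would observe that $P_m \square P_n$ contains no $C_4$ only when $m=1$ or $n=1$; in general it is full of $4$-cycles, so \cref{no-c4} does not apply directly. Instead, the right tool is \cref{self-match} together with \cref{B_aut_G}: to show $G = P_m \square P_n$ is prime, it suffices to show that in any factorization $G = HK$ the factor $H$ must be a perfect matching, and by \cref{self-match} it is enough to produce a matched-pair endpoint in the (unique) component $G$. So I would argue by contradiction: suppose $G$ factors into $H$ and $K$ with neither factor a perfect matching.

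The key structural input is that a grid is \emph{geodetic-like} along its axes: between any two vertices in the same row (same second coordinate) or same column (same first coordinate) there is a unique shortest path, namely the straight segment. More is true — I would want to identify a pair of vertices $u,v$ in $G$ joined by a unique shortest path, which by \cref{fix_uni_} forbids any perfect matching $H$ whose adjacency matrix $B$ is an automorphism of $G$ with no fixed edge and $Bu = v$. Combined with \cref{B_aut_G} (which says $B$ \emph{must} be such an automorphism whenever $H$ is a perfect matching factor), this severely constrains $B$: it must map each vertex to another vertex with which it shares \emph{no} unique shortest path. But in a grid, corners are the obstruction — a corner vertex $c$ of degree $2$ has a unique shortest path to the diagonally opposite corner and to every vertex on its two incident sides. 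The cleanest route is probably this: by \cref{thm7}, $\dd_G(u) = \dd_H(u)\dd_K(u)$, and the four corners of the grid have degree $2$, so at each corner one of $\dd_H,\dd_K$ equals $1$. Tracking this through \cref{deg} along the boundary rows and columns (degrees are constant on $H$-components and on $K$-components) forces $\dd_H \equiv 1$ on an entire boundary path, and then the diamond condition propagates $\dd_H \equiv 1$ inward across the grid, making $H$ a perfect matching.

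Concretely, the steps I would carry out are: (1) fix a factorization $G = HK$; (2) at a corner vertex $c$, use \cref{thm7} to get $\{\dd_H(c),\dd_K(c)\} \ni 1$, say $\dd_H(c)=1$; (3) let $c'=N_H(c)$ be its unique $H$-neighbour — $c'$ lies on a boundary edge, and by \cref{deg} every vertex in the same $H$-component as $c$ has $\dd_K$ equal to $\dd_K(c)$; (4) use the alternating-path / diamond-condition argument (exactly as in the proof of \cref{no-c4}) to walk along the boundary and then into the interior, at each step finding that a new vertex has $H$-degree $1$: whenever a vertex $x$ has $\dd_H(x)=1$ and $\dd_G(x) > 2$, its unique $H$-neighbour $N_H(x)$ has $\dd_K(N_H(x)) = \dd_K(x) = \dd_G(x) \ge 2$, and then for any $K$-neighbour the diamond condition yields another $H$-degree-$1$ vertex; (5) conclude $\dd_H(x)=1$ for all $x\in V(G)$, i.e. $H$ is a perfect matching, so by \cref{self-match}'s criterion (or directly) $G$ is prime.

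The main obstacle is step (4): making the propagation argument airtight so that it genuinely covers \emph{every} vertex of the grid, not just the boundary. The subtlety is that $\dd_G$ is not constant — interior vertices have degree $4$, edge vertices degree $3$, corners degree $2$ — so I must be careful that the inductive claim ``$\dd_H(x)=1$'' is preserved when moving between vertices of different degrees, and that I never get stuck at a degree-$2$ vertex where $\dd_H$ could a priori be $2$. The resolution is that once $\dd_H(x)=1$ is known for one neighbour of a degree-$2$ vertex $y$ (via an alternating path through $y$), \cref{deg} forces $\dd_K$ to agree along $H$-components and the diamond condition forces $\dd_H(y)=1$ as well, since $\dd_H(y)=2$ would make $\dd_G(y)=2\dd_K(y)$ with $\dd_K(y)\ge 2$ contradicting $\dd_G(y)=2$ unless $\dd_K(y)=1$ — and $\dd_K(y)=1$ with $\dd_H(y)=2$ still gives a matched pair on the $K$-side, which again triggers \cref{self-match}. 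Either way a matched pair appears in $G$, so the contradiction with ``neither factor is a perfect matching'' is reached. I would present this as a single clean induction on distance from a fixed corner, invoking \cref{degree=1} only if a $C_4$-free sub-argument is needed, and otherwise relying purely on \cref{thm7}, \cref{deg}, and the diamond condition.
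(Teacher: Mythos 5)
Your proposal diverges from the paper's proof, and it contains a genuine gap. The paper's argument is a three-line case split: for $n=1$ the grid is a path, hence a tree, hence not factorable (\cref{tree}) and vacuously prime; for $C_4$ and for $n\ge 2$, $m\ge 3$ it invokes \cref{pk}, choosing the prime $p=2$ (resp.\ $p=3$) and noting that no vertex has degree $kp$ with $k>1$ because the maximum degree is $4$. The whole point of that choice is that the matched pair appears in \emph{one} step: a vertex $a$ with $\dd_G(a)=3$ has $\dd_K(a)=3$ (say), its $H$-neighbour $c$ then has $\dd_K(c)=3$ by \cref{deg}, and since $\dd_G(c)=3\,\dd_H(c)\le 4$ we get $\dd_H(c)=1$ immediately. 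No propagation across the grid is needed.

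Your argument instead starts at a corner of degree $2$, and this is precisely where it breaks. With $\dd_H(c)=1$, $\dd_K(c)=2$, the $H$-neighbour $c'$ satisfies $\dd_K(c')=2$ and $\dd_G(c')=2\,\dd_H(c')$, which allows $\dd_G(c')=4$ with $\dd_H(c')=\dd_K(c')=2$ — the induction stalls at the very first step, because $4=2\times 2$ is a legitimate grid degree while $3k$ ($k>1$) is not. Your proposed repair is not valid: a single vertex $y$ with $\dd_K(y)=1$ is not a matched pair; you would also need its $K$-neighbour to have $K$-degree $1$, which you do not establish. A second, conceptual error is the assertion that $c'=N_H(c)$ ``lies on a boundary edge'': the factors $H$ and $K$ are \emph{not} subgraphs of $G$ (compare the paper's $P_2\square P_4$ example, where $H$ is the antipodal matching), so $H$-neighbours need not be $G$-neighbours and there is no ``walking along the boundary.'' Finally, the material on unique shortest paths and \cref{fix_uni_} is irrelevant to primality; it belongs to the separate question of \emph{which} grids are factorable (\cref{gridss}). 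To fix your proof, discard the corner-based propagation and anchor the argument at a degree-$3$ vertex (handling $P_m$ and $C_4$ separately), which is exactly what \cref{pk} packages.
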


\begin{proof}
Let $G=P_n\square P_m$.
We split our proof depending on the values of $m$ and $n$.
\begin{itemize}[leftmargin=*]
    \item If $n=1$ and $m\geq 2$, then $G \cong P_m$ and the result follows from \Cref{tree}.
    \item If $n=m=2$, then $G \cong C_4$.
    All vertices have degree 2 and none has degree $2k\,(k>1)$, so \Cref{pk} applies.  

    \item Let $n\geq 2$ and $m\geq 3$. Here $G$ contains vertices of degree 3 but none of degree $3k\,(k>1)$; \Cref{pk} again yields the result.
    \qedhere
\end{itemize}
\end{proof}

Let $G$ be a connected graph. For $u,v\in V(G)$, let $d(u,v)$ denote their (shortest-path) distance.  
For $v\in V(G)$, its \defin{eccentricity} is $\epsilon(v)=\max_{u\in V(G)} d(v,u)$.  
The \defin{radius} of $G$ is $r=\min_{v\in V(G)} \epsilon(v)$, and the \defin{center} is
$$
\mathrm{cen}(G)=\{v\in V(G)\mid \epsilon(v)=r\}.
$$
      
\begin{remark}\label{fix_center}
Every automorphism of $G$ fixes $\mathrm{cen}(G)$ setwise.
\end{remark}
\noindent Next, we have the following folklore result.
\begin{lemma}\label{center_grid}
Let $m, n \in \mathbb{N}$. 
\begin{enumerate}[label=\rm{(\roman*)}]
    \item If exactly one of $m$ or $n$ is even, then the center of $P_n \square P_m$ has exactly two adjacent vertices.
    \item If both $m$ and $n$ are even, then the center of $P_n \square P_m$ has exactly four vertices.\qedhere
\end{enumerate}
\end{lemma}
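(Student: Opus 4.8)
The plan is to work entirely with the standard coordinate model of the grid. Identify $V(P_n \square P_m)$ with the set of pairs $(i,j)$, $1 \le i \le n$, $1 \le j \le m$, under which the graph distance is the $\ell_1$ distance $d\big((i,j),(i',j')\big) = |i-i'| + |j-j'|$. The first step is to compute eccentricities. Since $i'$ and $j'$ range over $\{1,\dots,n\}$ and $\{1,\dots,m\}$ independently, the maximum of a sum splits as a sum of maxima:
\[
\epsilon(i,j) \;=\; \max_{i',j'}\big(|i-i'| + |j-j'|\big) \;=\; \max_{i'}|i-i'| + \max_{j'}|j-j'| \;=\; \max(i-1,\,n-i) \;+\; \max(j-1,\,m-j).
\]
Thus everything reduces to understanding, for a single path, the function $f_N(t) := \max(t-1,\,N-t)$ on $\{1,\dots,N\}$.

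The second step is this one-dimensional analysis. The function $f_N$ is the pointwise maximum of the strictly increasing $t \mapsto t-1$ and the strictly decreasing $t \mapsto N-t$, so it is valley-shaped: it strictly decreases, reaches its minimum, then strictly increases. The two constituent functions are equal at $t = (N+1)/2$, so if $N$ is odd the minimum of $f_N$ is attained only at the integer $t = (N+1)/2$, while if $N$ is even it is attained exactly at the two consecutive integers $t = N/2$ and $t = N/2+1$. Because $\epsilon(i,j) = f_n(i) + f_m(j)$ is a sum of terms in disjoint variables, the radius of $P_n \square P_m$ equals $\min_t f_n(t) + \min_t f_m(t)$ and, crucially,
\[
\mathrm{cen}(P_n \square P_m) \;=\; \{\,i : f_n(i)\text{ is minimal}\,\} \times \{\,j : f_m(j)\text{ is minimal}\,\}.
\]

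With this product description the two claimed cases are immediate. If exactly one of $m,n$ is even, one factor above is a single integer and the other is a pair of consecutive integers, so $\mathrm{cen}(P_n \square P_m)$ consists of two vertices that agree in one coordinate and differ by $1$ in the other — i.e., two adjacent vertices, which is (i). If both $m$ and $n$ are even, both factors are pairs of consecutive integers, so the center has $2 \times 2 = 4$ vertices, which is (ii). (If both were odd one would instead get a single central vertex, but that case is not needed here.)

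I do not anticipate a genuine obstacle: the lemma is a folklore distance computation. The only points deserving a line of justification are the separation-of-variables identity $\max_{i',j'}(\cdots) = \max_{i'}(\cdots) + \max_{j'}(\cdots)$ and the analogous factorization of $\mathrm{cen}$ as a product of one-dimensional minimizer sets; both hold precisely because the two coordinates of a vertex in a Cartesian product of paths vary independently.
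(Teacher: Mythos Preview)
Your argument is correct. The paper states this lemma as a folklore result and gives no proof whatsoever (the \verb|\qedhere| closes the statement, not an argument), so there is nothing to compare your approach against. What you wrote is precisely the standard justification: the graph metric on $P_n\square P_m$ is the $\ell_1$ metric, eccentricity separates as $\epsilon(i,j)=f_n(i)+f_m(j)$ with $f_N(t)=\max(t-1,N-t)$, and hence the center factorizes as the product of the one-dimensional minimizer sets, each of which has one or two elements according to the parity of $N$.
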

\noindent In the next theorem, we discuss when $P_n \square P_m$ is factorable. If $m = n = 1$, then $P_1 \square P_1$ is a single vertex, and its adjacency matrix is a $1 \times 1$ zero matrix.
Trivially, every $n \times n$ zero matrix is the matrix product of two $n \times n$ zero matrices.

\gridsss*
\begin{proof}
Let \( G = P_n \square P_m \).  

First, assume $G$ is factorable. By \Cref{grid_prime}, $G$ is prime, and by \Cref{even} $G$ has an even number of vertices. Hence at least one of $m$ or $n$ is even.

Suppose for contradiction that exactly one of $m,n$ is even. Then center of $G$ consists of exactly two adjacent vertices (by \Cref{center_grid}). By \Cref{fix_center}, every automorphism of $G$ fixes this center edge. However, by \Cref{B_aut_G}, \( B \) is an automorphism of \( G \) with no fixed edges, contradicting the previous observation. Thus, both \( m \) and \( n \) must be even.  

For the backward implication, assume that both \( m \) and \( n \) are even.  
The vertex set of \( G \) is  
\[
V(G) = \{(i,j) \mid 1 \leq i \leq n, \ 1 \leq j \leq m \}.
\]  
There is a unique shortest path between \( (1,1) \) and \( (1,m) \) and similarly between \( (1,1) \) and \( (n,1) \).  
Since \( B \) is an automorphism of \( G \), it follows that  
\[
B(1,1) = (n,m), \quad B(n,1) = (1,m).
\]  
Because \( B \) does not fix any vertex of \( G \), we deduce that  
\[
B(1,i) = (n, m+1-i), \quad B(j,1) = (n+1-j, m).
\]  
Applying the same argument,  
\[
B(2,i) = (n-1, m+1-i), \quad B(j,2) = (n+1-j, m-1).
\]

By induction, $B(i,j) = (n+1-i, m+1-j)$ defines the unique automorphism, proving factorability.
\end{proof}

\begin{definition}{\rm(cf. \cite[p. 382]{domi})}\label{torus}
The \defin{torus} or torus grid graph $T_{m, n}$ is the graph formed from the Cartesian product $C_m \square C_n$ of the cycle graphs $C_m$ and $C_n$. 
\end{definition}

\begin{definition} \label{CayleyDefn}
{\rm(cf. \cite[p. 34]{godsil2001algebraic})} If $S$ is a symmetric subset(A set~$S$ is \emph{symmetric} if $S = S^{-1}$, where  $S^{-1} = \{\, s^{-1}\mid s\in S \,\}$.) of a group~$G$, then the corresponding \emph{Cayley
graph} $\cay(G;S)$ is the undirected graph whose vertices are the elements of $G$, and such that vertices
$x$ and $y$ are adjacent if and only if $x^{-1}y \in S$.
\end{definition}

\noindent The torus $C_n\square C_m$ is isomorphic to $\cay(\mathbb Z_n\times \mathbb Z_m;(\pm 1,0),(0,\pm 1))$.

\torusss*

\begin{proof}
Let us denote the vertices of $C_m \square C_n$ by 
$$V(C_n\square C_m)=\{(i,j)\mid 0\leq j\leq m-1,0\leq i\leq n-1\}.$$
Depending on $m$ and $n$ we have the following cases:a symmetric subset (A set~$S$ is \emph{symmetric} if $S = S^{-1}$, where  $S^{-1} = \{\, s^{-1}\mid s\in S \,\}$.)
\begin{itemize}[leftmargin=*]
    \item Let at least one of $m$ and $n$ be even. Without loss of generality, assume that $n$ is even.
    We define the graph $H$ with the same vertex set $V(C_n \square C_m)$ and the edge set $\{(i,j) \blueedge (i+\frac{n}{2},j) \mid 0\leq i \leq \frac{n}{2}, 0\leq j \leq m-1\}$. 
    Note that the sum in component occurs modulo $\mathbb{Z}_n$ and $\mathbb{Z}_m$ respectively.
    It is not hard to see that $H$ is a perfect matching on $V(C_n \square C_m)$. Now, consider the adjacency matrix $B$ of $H$. $B$ has no fixed edge, and by \cref{B_aut_G}, $G$ is factored into $H$ and some graph $K$.
    
    \item Let $m$ and $n$ be both odd. In this case, we define $H$ and $K$ directly. 
    We set $H\coloneqq \cay(\Z_n\times \Z_m,(\frac{n+1}{2},\frac{m-1}{2}),(\frac{n-1}{2},\frac{m+1}{2}))$ and $K\coloneqq \cay(\Z_n\times \Z_m,(\frac{n-1}{2},\frac{m-1}{2}),(\frac{n+1}{2},\frac{m+1}{2}))$.
    Assume that $H$ has blue edges and $K$ red edges(dashed edges).
    Let $ u\blueedge v\rededge w$ and $u\rededge z \blueedge t$, where $u=(i,j)$. Then we have the following cases:
    \begin{itemize}
        \item We first assume that $ u+(\frac{n+1}{2},\frac{m-1}{2})=v$ and $v+(\frac{n-1}{2},\frac{m-1}{2})=w$.
        Thus we have $ u+(n,m-1)=w$ and so $w=(i,j-1)$.
        \item We assume that $ u+(\frac{n-1}{2},\frac{m+1}{2})=v$ and $v+(\frac{n-1}{2},\frac{m-1}{2})=w$.
        Thus we have $ u+(n-1,m)=w$ and so $w=(i-1,j)$.
        \item We next assume that $ u+(\frac{n+1}{2},\frac{m-1}{2})=v$ and $v+(\frac{n+1}{2},\frac{m+1}{2})=w$.
        Thus we have $ u+(n+1,m)=w$ and so $w=(i+1,j)$.
        \item We next assume that $ u+(\frac{n-1}{2},\frac{m+1}{2})=v$ and $v+(\frac{n+1}{2},\frac{m+1}{2})=w$.
        Thus we have $ u+(n,m+1)=w$ and so $w=(i,j+1)$.
    \end{itemize}
    Hence we prove that $(i,j)$ is adjacent to $(i-1,j),(i,j-1),(i+1,j)$ and $(i,j+1)$.
    Next, we need to show that $H\oplus K$ has the diamond condition.
    Assume that the vertices $(i,j)$ and $(i',j')$ belong to the following diamond:
    $$(i,j)\blueedge X_1\rededge (i',j')\blueedge X_2 \rededge (i,j),$$
where $X_i \in V(H \oplus K)$ for $i = 1, 2$. Note $(i', j')$ is one of the following cases: $(i-1, j)$, $(i, j-1)$, $(i+1, j)$, and $(i, j+1)$. In addition, one can see that the vertices $X_1$ and $X_2$ are defined uniquely for each case.
This shows that the matrix product of $H$ and $K$ is $C_n\Box C_m$.\qedhere
\end{itemize}
\end{proof}

\section{Further research}
We close the paper with the following problems:
\begin{enumerate}
\item Let $d$ and $t$ be integers with $d \ge 1$ and $t \ge 3$. The
\defin{windmill graph $W_{d,t}$} is obtained by taking $d$ copies of the
clique $K_t$ and identifying exactly one vertex from each copy into a
single vertex. We call this shared vertex $c$ and refer to it as the
\defin{hub}.
Each copy contributes $t-1$ new vertices, so there are $d(t-1)$ non-hub
vertices in total. Furthermore, each non-hub vertex lies in exactly one
copy of $K_t$ and is adjacent to the hub $c$ and to the other $t-2$
vertices of its own clique.
For example, when $t = 3$, the graph $W_{d,3}$ is exactly the standard
friendship graph $F_d$, and it follows from \cite[Proposition 3.29]{maghsoudi2023matrix}
that $F_d$ (and hence $W_{d,3}$) does not admit a factorization.
On the other hand, the windmill graph $W_{1,4t+1} = K_{4t+1}$ does admit
a factorization.

We ask to determine all pairs $(d,t)$ for which $W_{d,t}$ admits
a factorization.

\item The authors in \cite{maghsoudi2023matrix} proved that the complete graph $K_n$ admits a factorization if and only if $n=4t+1$.
They also provide two factorizations of $K_{4t+1}$, where both factors are transitive in both cases.
We ask the following: find two non-transitive graphs $H$ and $K$ with adjacency matrices $B$ and $C$ respectively such that $BC=J-I$ 
\end{enumerate}

\printbibliography

\end{document}